\gdef\san{\mbox{\bf 山}}
\gdef\ten{\mbox{\bf 天}}
\newcommand{\C}{\mathbb{C}}
\newcommand{\F}{\mathbb{F}}
\newcommand{\Q}{\mathbb{Q}}
\newcommand{\R}{\mathbb{R}}
\newcommand{\Z}{\mathbb{Z}}
\newcommand{\SQ}{S_\mathbb{Q}}
\newcommand{\OS}{\mathscr{O}_S}
\newcommand{\OSQ}{\mathscr{O}_{S_\mathbb{Q}}}
\newcommand{\OK}{\mathscr{O}_K}
\newcommand{\OT}{\mathscr{O}_T}
\newcommand{\bmu}{\bm{\upmu}}
\newcommand{\brho}{\bm{\uprho}}
\newcommand{\bDelta}{\bm{\Delta}}
\renewcommand{\Re}{\mathrm{Re}\,}
\renewcommand{\Im}{\mathrm{Im}\,}
\renewcommand{\P}{\mathbb{P}}
\DeclareMathOperator{\ord}{ord}
\DeclareMathOperator{\lcm}{lcm}
\DeclareMathOperator{\Gal}{Gal}
\DeclareMathOperator{\Out}{Out}
\DeclareMathOperator{\Aut}{Aut}
\newtheorem{theorem}{Theorem}[section]
\newtheorem*{main_theorem*}{Main Theorem}
\newtheorem{lemma}[theorem]{Lemma}
\newtheorem{proposition}[theorem]{Proposition}
\newtheorem{corollary}[theorem]{Corollary}
\theoremstyle{definition}
\newtheorem*{remark}{Remark}
\newtheorem*{example}{Example}
\title[Picard Curves over $\Q$ good outside $3$]{Picard Curves over $\Q$ with \\ good reduction away from $3$}
\author{Beth Malmskog and Christopher Rasmussen}
\subjclass[2010]{11E76, 11G30 (primary), 11J86, 11Y40, 14Q05, 14H30, 11G32 (secondary)}
\keywords{Picard curves, binary forms, $S$-unit equation}
\begin{document}
\begin{CJK}{UTF8}{min}

\begin{abstract}
Inspired by methods of N.~P.~Smart, we describe an algorithm to determine all Picard curves over $\Q$ with good reduction away from 3, up to $\Q$-isomorphism. A correspondence between the isomorphism classes of such curves and certain quintic binary forms possessing a rational linear factor is established. An exhaustive list of integral models is determined, and an application to a question of Ihara is discussed.
\end{abstract}

\maketitle

\section{Introduction}\label{sec:Intro}
Let $k_0$ be a field of characteristic other than $2$ or $3$. A Picard curve $\mathcal{C}/k_0$ is a smooth, projective, absolutely irreducible cyclic trigonal curve of genus $3$. Over any extension $k/k_0$ possessing a primitive cube root of unity, the required degree $3$ morphism $x \colon \mathcal{C} \to \P^1$ is Galois, and one always may recover from this map a smooth affine model of the form
\[ y^3 = F(x,1), \]
where $F(X,Z) \in k_0[X,Z]$ is a binary quartic form. Picard curves are always non-hyperelliptic, and provide the simplest examples of curves of gonality $3$. They have generated interest from geometric, arithmetic, and computational perspectives.  Picard curves and their moduli arise in connection with some Hilbert problems \cite{Holzapfel:1995}, and the monodromy of such moduli spaces have been studied in \cite{Achter-Pries:2007}.  Efficient algorithms have been found for arithmetic on the associated Jacobian varieties over finite fields \cite{Galbraith-Paulus-Smart:2002, Bauer:2004, Flon-Oyono:2004}.   Algorithms for counting points \cite{Bauer-Teske-Weng:2005} and statistics on point counts for Picard curves and the associated Jacobian varieties have also been studied \cite{Wood:2012, Bucur:2009}.

In the present article, we enumerate all $\Q$-isomorphism classes of Picard curves $\mathcal{C}/\Q$ with good reduction away from $3$. Because both the geometry and the arithmetic of such curves is exceptional, they have some applications to an open question of Ihara, which we discuss briefly at the end of the paper.

\subsection{Blueprint of Smart}
In \cite{Smart:1997}, N.~P.~Smart enumerated all curves $\mathcal{C}/\Q$, up to $\Q$-isomorphism, subject to the following constraints:
\begin{itemize}
\item $\mathcal{C}$ has genus $2$,
\item $\mathcal{C}$ has good reduction away from the prime $2$.
\end{itemize}
Because all such curves are hyperelliptic, they are naturally equipped with a degree $2$ morphism to $\P^1$. To carry out the exhaustive search, Smart first establishes a one-to-one correspondence
\[
\left\{ \begin{array}{c}
   \text{$\Q$-isomorphism classes} \\
   \text{of genus $2$ curves $\mathcal{C}/\Q$} \\
   \text{good away from $2$}
\end{array} \right\}_{/\text{twists}}
 \quad \stackrel{\cong}{\longleftrightarrow} \quad
\left\{ \begin{array}{c}
   \text{$\Z[\tfrac{1}{2}]$-equivalence classes} \\
   \text{of binary sextic forms} \\
   \text{good away from $2$}
\end{array} \right\}.
\]
(The definition of $\Z[\tfrac{1}{2}]$-equivalence is explained in \S\ref{sec:NotationsBackground}.) The correspondence is straightforward: Given a representative $F(X,Z)$ of a class from the set on the right, the associated curve $\mathcal{C}$ has an affine model of the form $y^2 = F(x,1)$. However, this association is not quite one-to-one; if $K/\Q$ is a quadratic extension unramified away from $2$, then the set on the right cannot distinguish between a curve $\mathcal{C}$ and its quadratic twist by $K$. The subscript ``twists'' indicates that the association is a bijection up to this family of twists; of course, recovering the distinct twists is trivial once one curve in the family is known, since there are only a finite number of extensions $K/\Q$ with the necessary properties.

Any class from the right hand set has a representative $F(X,Z)$ whose numerical invariants may be expressed in terms of solutions to an $S$-unit equation over the splitting field of $F$. By \cite[Lemma 3]{Birch-Merriman:1972}, this field is bounded in degree and may ramify only at $2$. Hence, there are only finitely many fields to consider, and each yields only finitely many solutions to the $S$-unit equation. Smart then gives the results of a complete search; first finding all possible $S$-unit equation solutions (thus all possible numerical invariants), and then determining at least one representative for each of the desired classes of sextic forms. Recovering the genus $2$ curves is now immediate from the correspondence. The process does not boil down to brute force, however. Smart employs several nontrivial algorithms to reduce the search to one of manageable size.

\subsection{Picard curves via Smart's approach}
As a hyperelliptic curve $\mathcal{C}/k$ is equipped with a degree $2$ morphism $\mathcal{C} \to \P^1$, one might suspect that the generalization of Smart's approach to the case of Picard curves will be immediate. That is, should one expect the association
\[ \left\{ \begin{array}{c} \text{$\Q$-isomorphism classes} \\ \text{of Picard
    curves $\mathcal{C}/\Q$} \\ \text{good away from $3$} \end{array} \right\}_{/\text{twists}}
\quad \longrightarrow \quad \left\{
\begin{array}{c} \text{$\Z[\tfrac{1}{3}]$-equivalence classes} \\
  \text{of binary quartic forms} \\ \text{good away from $3$} \end{array}
\right\} \]
also to be a one-to-one correspondence, up to twists? Unfortunately, no: the association of classes of forms to isomorphism classes of Picard curves is not even well-defined, as there are $\Z[\frac{1}{3}]$-equivalent quartic forms $F$, $G$ whose associated Picard curves
\[ \mathcal{C}_F \colon y^3 = F(x,1), \qquad \mathcal{C}_G \colon y^3 = G(x,1), \]
are not isomorphic over $\Q$. In \S\ref{sec:PicardCurves}, we introduce a finer notion of equivalence which will let us establish a correspondence between isomorphism classes of Picard curves and equivalence classes of forms.

Once this correspondence is established, the search for Picard curves with good reduction away from $3$ follows Smart's program in many important respects. However, the project diverged from Smart's work in the more delicate nature of the curves involved, which necessitated developing modified algorithms for determining all equivalence classes.  This paper also includes one theoretical improvement to Smart's approach: Lemma \ref{lemma:infinite place} makes the use of a $p$-adic LLL reduction unnecessary. We may avoid using the bounds on $p$-adic logarithmic forms established by Yu \cite{Yu:2007}, as well as the $p$-adic LLL algorithm used by Smart \cite[\S4]{Smart:1995}. The primary result of the paper is the following:
\begin{main_theorem*}
Up to $\Q$-isomorphism, there are exactly $63$ Picard curves $\mathcal{C}/\Q$ with good reduction away from $3$. Moreover, up to cubic twists, there are $21$ classes of such curves; each of these contains exactly $3$ $\Q$-isomorphism classes of Picard curves (corresponding to the twists by $\alpha = 1, 3, 9$).
\end{main_theorem*}
\subsection{Outline}
In \S\ref{sec:NotationsBackground}, we introduce some notation and review some background on
binary forms from Smart's methods. In \S\ref{sec:PicardCurves}, we introduce a finer notion of equivalence and establish the desired one-to-one correspondence between classes of Picard curves and classes of objects called quintic-linear pairs. In \S\ref{sec:QuarticSEquiv} and \S\ref{sec:QuarticS0Equiv}, we explain how to recover a representative from each equivalence class of quintic-linear pairs over $\Q$ with good reduction away from $3$. In \S\ref{sec:SolvingSUnit}, we describe the methods used to solve the $S$-unit equation. In \S\ref{sec:Conclusions}, we provide explicit models for the classes of Picard curves, and discuss further directions and applications of the
work.
\subsection*{Acknowledgments}
The authors are delighted to acknowledge the contribution of Michel B\"orner, at the Institute of Pure Mathematics, Ulm University. He brought to our attention that the list of isomorphism classes of curves provided in an earlier version of this paper was incomplete! B\"orner, after reading our previous version, had implemented this algorithm   independently, and contacted us when his own computation yielded classes we had missed. Upon review of our own implementation, we discovered an error that had led to the omissions. We are extremely grateful to him for the assistance. We emphasize that this computation has now been carried out independently by both B\"orner and the authors, and the results are in agreement.

In addition, the authors are grateful to Jeff Achter, Wai Kiu Chan, Norman Danner, David Grant, Rachel Pries, and John Voight for many helpful conversations during this project. We appreciate the detailed comments of the anonymous referees, which certainly improved the clarity of the paper and alerted the authors to an error in a previous version. This research was partially supported through Wesleyan University's Van Vleck Fund.

\section{Notations and Background}\label{sec:NotationsBackground}

\subsection{$S$-integers and $S$-units}
Let $K/\mathbb{Q}$ be a number field with $n_1$ real embeddings and $n_2$ pairs of conjugate complex embeddings (so $[K:\Q] = n_1 + 2n_2$), and let
\[S = \{  \mathfrak{p}_1, \dots, \mathfrak{p}_s, \mathfrak{p}_{s+1}, \dots, \mathfrak{p}_{s+n_1}, \mathfrak{p}_{s + n_1 + 1}, \dots, \mathfrak{p}_{s + n_1 + n_2} \}, \]
be a finite set of places of $K$ which includes all the infinite places of $K$ and exactly $s$ finite places. By convention, we enumerate first the finite places, then the real, then the complex infinite places. Let $\OK$ be the ring of integers of $K$, and let $\OS$ be the ring of $S$-integers in $K$. If $K = \Q$ and $S = \{p_1, \dots, p_r, \infty \}$, then the ring $\OS$ has the form
\[ \OS = S^{-1}\Z = \Z[ \tfrac{1}{p_1}, \cdots, \tfrac{1}{p_r}]. \]
For any $\OS$-ideal $\mathfrak{a}$, let $\mathfrak{a}^*$ denote an $\OK$-ideal, coprime to the finite places of $S$, such that $\mathfrak{a} = \mathfrak{a}^*\OS$. The \emph{$\OS$-norm} of $\mathfrak{a}$ is
\[ |\mathfrak{a}|_S := \mathbf{N}_{K/\Q}(\mathfrak{a}^*)^{1/[K:\Q]}. \]
For any $\alpha \in \OS$, we set $|\alpha|_S = |\alpha \OS|_S$. We let $\OS^\times$ be the group of $S$-units in $K$.

Let $S_\Q$ be a fixed, finite set of rational primes, together with $\infty$, the infinite place of $\Q$. Often the set $S$ will contain precisely those places of $K$ lying above a place of $S_\Q$. Even more explicitly, we usually have in mind $S_\Q = \{3, \infty \}$. We record the following fact for later use in solving $S$-unit equations.
\begin{lemma}\label{lemma:small_nf_disc_3}
Suppose $K$ is a number field with $[K:\Q] \leq 6$, for which the extension $K/\Q$ is unramified away from $\{3, \infty \}$. Then
\begin{enumerate}[(i)]
\item there is a unique, totally ramified prime $\mathfrak{p}$ in $\OK$ above $(3)$, and
\item the class number of $K$ is $1$.
\end{enumerate}
\end{lemma}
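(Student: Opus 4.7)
The strategy combines Hermite--Minkowski with a direct enumeration of number fields. Since $K/\Q$ is unramified outside $\{3, \infty\}$, its absolute discriminant satisfies $|d_K| = 3^a$ for some $a \ge 0$, and for each $n=[K:\Q]\le 6$ only finitely many such $K$ exist up to isomorphism.

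First I would bound $a$ from above. Writing $3\OK=\prod_i \mathfrak{p}_i^{e_i}$ with $\sum_i e_i f_i = n$, the differential exponents satisfy $d_i \le e_i-1+e_i\,v_3(e_i)$ (Serre, Local Fields III.6), so the identity $a = \sum_i f_i d_i$ together with the constraint $e_i\le n\le 6$ gives an explicit upper bound on $a$. Combined with Minkowski's lower bound on $|d_K|^{1/n}$, this reduces the problem to a short list of admissible pairs $(n,a)$ and, for each, a short list of isomorphism classes of candidate $K$. Alternatively, one may simply consult the Jones--Roberts database of number fields with prescribed ramification, which supplies this list directly for $n\le 6$.

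For part (i), I would then show that the factorization of $(3)$ is forced to be $\mathfrak{p}^n$ with residue degree $f(\mathfrak{p}/3)=1$: if $g\ge 2$ or $f>1$, a case analysis on the allowed types $(e_i,f_i)$ shows that the resulting contribution to $v_3(d_K)$ falls short of the Minkowski lower bound. The sole surviving factorization is $g=1$, $e=n$, $f=1$, which gives the unique totally ramified prime $\mathfrak{p}$.

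For part (ii), I would apply Minkowski's bound $M_K = \tfrac{n!}{n^n}(4/\pi)^{n_2}\sqrt{|d_K|}$ and recall that the class group is generated by primes of norm at most $M_K$. The unique prime of residue characteristic $3$ is $\mathfrak{p}$, of norm $3$, while any other relevant prime lies above some small unramified rational prime. From a defining polynomial for each $K$ on the short list, one verifies directly (e.g.\ in PARI/GP or Magma) that $\mathfrak{p}$ and these auxiliary primes are principal. The main obstacle is the bookkeeping in the degree-$6$ case, where the candidate list is longest and the Minkowski bound largest; the computation is nevertheless routine once that finite list is in hand.
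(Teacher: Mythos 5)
Your proposal takes essentially the same route as the paper: the paper's proof simply observes that the Jones--Roberts database supplies the complete finite list of such $K$, and that (i) and (ii) are then verified by direct computation on each field, which is your ``alternatively, consult the database'' fallback. So the overall structure is sound.

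One caveat worth flagging: the purely theoretical case analysis you sketch for part (i) --- that every non-totally-ramified factorization type makes $v_3(d_K)$ fall short of the Minkowski lower bound --- does not actually close in degree $6$. For $n=6$ the Minkowski bound forces $v_3(d_K)\geq 7$ (or $8$, depending on signature), but the conductor-discriminant estimate $d_i \leq e_i - 1 + e_i v_3(e_i)$ still permits, say, $3\OK=\mathfrak{p}^3$ with $f(\mathfrak{p}/3)=2$ (giving $v_3(d_K)\leq 10$) or $3\OK=\mathfrak{p}_1^3\mathfrak{p}_2^3$ with $f_i=1$ (again up to $10$), neither of which is excluded by the inequality alone. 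So that step, as a freestanding argument, would fail; one genuinely needs the explicit list of fields (or some finer local/global input) to rule out these factorization types, which is exactly what the paper and your database fallback do. Your part (ii) argument via Minkowski's bound and checking principality of small primes is fine and is precisely the ``explicit computation'' the paper invokes.
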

\begin{proof}
There are only finitely many fields $K$ which satisfy the hypotheses of the lemma; a complete list may be found from the number field database of Jones and Roberts \cite{Jones-Roberts:NF}. The verification of (i) and (ii) is immediate for each field by explicit computation.
\end{proof}
Table \ref{table:nf_disc_3} records all the number fields for which the lemma applies, and which either have degree $\leq 4$, or are the Galois closure of such a field. In particular, fields $K_0, K_1, K_2$ are Galois; $K_3$, $K_3'$, $K_3''$ are conjugate, and $L_3$ is the Galois closure of $K_3$. (There are $2$ additional sextic fields unramified away from $\{3, \infty \}$, but we will not need these in the sequel.)
\begin{table}[ht!]
\caption{Number fields of small degree unramified outside $\{3, \infty\}$}
\label{table:nf_disc_3}
{\renewcommand{\arraystretch}{1.1}
\begin{tabular}{lcr}
\toprule
Field & Degree & Minimal Polynomial \\
\midrule
$K_0$ & $1$ & $x - 1$ \\
$K_1$ & $2$ & $x^2 + x + 1$ \\
$K_2$ & $3$ & $x^3 - 3x + 1$ \\
$K_3$ & $3$ & $x^3 - 3$ \\
$K_3'$ & $3$ &  \\
$K_3''$ & $3$ & \\
$L_3$ & $6$ & $x^6 + 3$ \\
\bottomrule
\end{tabular}
}
\end{table}

\subsection{Binary forms over number fields}
Keep $K$ and $S$ as in the previous section. Let
\[ F(X,Z) = a_r X^r + \cdots + a_2 X^2 Z^{r-2} + a_1 XZ^{r-1} + a_0 Z^r \in \OS[X,Z] \]
be a homogeneous binary form of degree $r$ with coefficients in $\OS$. Then $F(X,Z)$ factors over some finite extension $L/K$ as
\[ F(X,Z) = \lambda \prod_{i=1}^{r}(\alpha_i X + \beta_i Z), \qquad \lambda \in K^\times, \alpha_i, \beta_i \in L. \]
It is useful to write this in the shorthand
\[ F(X,Z) = \lambda \prod_{i=1}^r \langle \mathbf{a}_i, \mathbf{X} \rangle, \]
where $\mathbf{X} = (X, Z)^T $ and $\mathbf{a}_i = (\alpha_i, \beta_i)^T$.

\subsubsection{Discriminant}
By scaling the $\alpha_i, \beta_i$ appropriately (and possibly extending $L$), we may assume $\lambda = 1$. Then the \emph{discriminant} of $F(X,Z)$ is the quantity
\[ D(F) = \prod_{1 \leq i < j \leq r} (\alpha_i \beta_j - \alpha_j \beta_i)^2 \in K. \]
It vanishes if and only if one linear factor of $F$ is a scalar multiple of another linear factor (i.e., $F(x,1)$ has a repeated root). We note that for any $\mu \neq 0$, $D(\mu F) = \mu^{2r-2}D(F)$.

Since the form $F$ has integral coefficients, we may consider its reduction modulo any prime $\mathfrak{p}$ of $\OK$. This gives a binary form $\overline{F}$ over the residue field $\OK/\mathfrak{p}\OK$. If $\deg F > 1$, we say $F$ has \emph{good reduction at $\mathfrak{p}$} if the reduced form $\overline{F}$ has no repeated linear factors up to scaling. Equivalently, $D(\overline{F}) \neq 0$ in the residue field, or what is the same, $\mathfrak{p} \nmid D(F)\OK$. We say $F$ has \emph{good reduction outside $S$} if $F$ has good reduction at every $\mathfrak{p} \not\in S$. This is equivalent to the condition $D(F) \in \OS^\times$. In case $F$ is linear (in which case the discriminant is always $1$), we say $F = \alpha X + \beta Z$ has good reduction at $\mathfrak{p}$ if $\mathfrak{p} \nmid (\alpha, \beta)\OK$. Thus, a linear form has good reduction outside $S$ if and only if $(\alpha, \beta)\OS = \OS$.

\subsubsection{$\OS$-equivalence}
Let $R$ be a commutative ring with $1$, and $F \in R[X,Z]$ a binary form of degree $r$. For a matrix
\[ U = \begin{pmatrix} a & b \\ c & d \end{pmatrix} \in GL_2(R), \]
we define $F_U(X,Z) = F(U\mathbf{X}) = F(aX + bZ, cX + dZ)$. We say two binary forms $F, G \in R[X,Z]$ are \textit{$R$-equivalent}, and write $F \sim_R G$, if there exist $U \in GL_2(R)$ and $\lambda \in R^\times$ such that $G = \lambda F_U$. This is an equivalence relation on binary forms of degree $r$. It is also the notion of equivalence of forms mentioned in the introduction when $R = \Z[\frac{1}{2}]$ or $R = \Z[\frac{1}{3}]$. The form $F_U$ has discriminant
\[ D(F_U) = (\det U)^{r(r-1)} D(F). \]
If $R = \OS$ and $D(F) \in \OS^\times$, then $D(G) \in \OS^\times$ for any $G \sim_{\OS} F$. Thus, the property of possessing good reduction outside $S$ is preserved under $\OS$-equivalence.

\subsubsection{Proper factorizations}
We recall the notion of field systems from \cite{Smart:1997}. Let us assume $K = \Q$ and $S = S_\Q$. Fix an algebraic closure $\overline{\Q}$ of $\Q$. Then a binary form ${F \in \OS[X,Z]}$ has a factorization over $\Q$ of the form
\[ F = \lambda F_1 \cdots F_m, \qquad \lambda \in \Q^\times, \quad F_j \in \Q[X,Z] \; \text{irreducible}. \]
For each $1 \leq j \leq m$, let $M_j \subseteq \overline{\Q}$ be a minimal extension of $\Q$ over which $F_j(x,1)$ admits a root, or $M_j = \Q$ if $F_j = Z$. The field $M_j$ is unique up to conjugation. The \emph{field system} of $F$ is the $m$-tuple $(M_1, \dots, M_m)$. It is well-defined up to conjugation of the $M_j$ and the indexing of the irreducible factors of $F$. Moreover, the field systems of two $\OS$-equivalent forms must be the same up to conjugation.

Let $M$ be the Galois closure of the compositum $M_1 \cdots M_m$. If $r_i = [M_i:\Q]$, then each $M_i$ admits $r_i$ embeddings $\psi_{ij} \colon M_i \hookrightarrow \overline{\Q}$. We extend the field system to a list of exactly $r$ fields $(M_1, \dots, M_m, M_{m+1}, \dots, M_r)$, by appending the conjugate fields $\psi_{ij}(M_i)$. Of course, we may have repeated fields in this list or even in the field system.  The field systems considered in this project are listed in Table \ref{table:field-systems}.

\begin{table}[!ht]
\centering
\caption{Relevant field systems for $r = 4$}
\label{table:field-systems}
{\renewcommand{\arraystretch}{1.05}
\begin{tabular}{ccccccc}
\toprule
$\mathscr{M}$ & $M_0$ & $M_1$ & $M_2$ & $M_3$ & & $M$ \\
\toprule
$(K_0, K_0, K_0, K_0)$ & $K_0$ & $K_0$ & $K_0$ & $K_0$ & & $K_0$ \\
\midrule
$(K_0, K_0, K_1)$ & $K_0$ & $K_0$ & $K_1$ & $K_1$ & & $K_1$ \\
\midrule
$(K_0, K_2)$ & $K_0$ & $K_2$ & $K_2$ & $K_2$ & & $K_2$ \\
\midrule
$(K_0, K_3)$ & $K_0$ & $K_3$ & $K_3'$ & $K_3''$ & & $L_3$ \\
\midrule
$(K_1, K_1)$ & $K_1$ & $K_1$ & $K_1$ & $K_1$ & & $K_1$ \\
\bottomrule
\end{tabular}}
\end{table}
Let $\Sigma_r$ denote the permutation group on $\{1, \dots, r\}$.
\begin{lemma}\label{lemma:indexing}
After rescaling $\lambda$, re-indexing the $\mathbf{a}_i$, and re-indexing the fields $M_i$ for $1 \leq i \leq r$, we may assume that the factorization $F = \lambda \prod \langle \mathbf{a}_i, \mathbf{X} \rangle$ satisfies $\mathbf{a}_i \in M_i^2$. Moreover, there exists an injective group homomorphism $\iota \colon \Gal(M/\Q) \to \Sigma_r$ such that for every $i$ with $1 \leq i \leq r$, $\mathbf{a}_i^\sigma = \mathbf{a}_{\iota(\sigma)(i)}$.
\end{lemma}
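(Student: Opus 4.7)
The plan is to build the labelled factorization and the homomorphism $\iota$ directly from the $\Q$-irreducible decomposition of $F$, and then to deduce injectivity from the fact that $M$ is the Galois closure of the compositum of the $M_j$. First, write $F = \lambda F_1 \cdots F_m$ as in the preamble to the lemma. For each $1 \leq j \leq m$, fix a root $\alpha_j$ of $F_j(X,1)$ inside $M_j$ (or, if $F_j$ is a constant multiple of $Z$, use the analogous vector $(0,1)^T$ for the root at infinity). Then $\mathbf{a}_{j,1} := (1, -\alpha_j)^T$ already lies in $M_j^2$ and gives a linear factor of $F_j$, while the Galois conjugates $\mathbf{a}_{j,k} := (1, -\psi_{jk}(\alpha_j))^T$ lie in $\psi_{jk}(M_j)^2$ and exhaust the remaining linear factors of $F_j$. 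Absorbing the leading coefficients of the $F_j$ into $\lambda$, one obtains $F = \lambda \prod_{j,k} \langle \mathbf{a}_{j,k}, \mathbf{X}\rangle$, and a single re-indexing of vectors and fields matches each $\mathbf{a}_i$ with its field $M_i$ in the extended list.

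For the homomorphism, each $\sigma \in \Gal(M/\Q)$ acts coordinatewise on the set $A = \{\mathbf{a}_1, \ldots, \mathbf{a}_r\}$. Since $F \in \Q[X,Z]$ is fixed by $\sigma$, each $\mathbf{a}_i^\sigma$ still defines a linear factor of $F$. Under the running assumption that the linear factors of $F$ are pairwise non-proportional (i.e.\ $D(F) \neq 0$, which is the case of interest throughout the paper), $\mathbf{a}_i^\sigma$ coincides with a unique $\mathbf{a}_{i'}$, and defining $\iota(\sigma)(i) := i'$ produces an element of $\Sigma_r$. Compatibility with composition in $\Gal(M/\Q)$ is immediate, so $\iota$ is a group homomorphism.

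Injectivity is the cleanest part. If $\iota(\sigma) = \mathrm{id}$, then $\sigma$ fixes every $\mathbf{a}_i$, hence every coordinate $\psi_{jk}(\alpha_j)$ for $1 \leq j \leq m$ and all embeddings $\psi_{jk}$. Thus $\sigma$ fixes the compositum of every conjugate of every $M_j$, which is by definition $M$, so $\sigma = \mathrm{id}_M$. The only real obstacle is the combinatorial bookkeeping needed to make the indexing coherent across the blocks indexed by $j$ and their Galois conjugates; once this is in place, the homomorphism property and injectivity of $\iota$ follow formally from the setup.
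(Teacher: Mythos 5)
Your proof is correct and follows the same route that the paper (via the citation to Smart, pp.~273--274) sketches: build the factorization from a chosen root of each $\Q$-irreducible factor, use the Galois conjugates of that root to fill out the remaining linear factors, and read off the permutation action on the indices. The only remark worth adding is that the uniqueness of $i'$ in the definition of $\iota$ — and hence the well-definedness of $\iota$ as a map to $\Sigma_r$ — does rely on the implicit standing assumption $D(F)\neq 0$ (pairwise non-proportional factors), which you correctly identify; in the paper this is always in force since the forms of interest have $D(F)\in\OS^\times$.
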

\begin{proof}
The proof follows by enumerating the various embeddings of the $M_i$ and then re-indexing carefully; the details are explained in \cite[pg.~273-274]{Smart:1997}.
\end{proof}
\begin{remark}
The homomorphism $\iota$ allows us to identify $\Gal(M/\Q)$ as a subgroup of $\Sigma_r$ in a way that respects the action on the $\mathbf{a}_i$. For the remainder of the paper, we will abuse notation and suppress the homomorphism $\iota$. Hence, we simply write $\mathbf{a}_{\sigma(i)}$ for $\mathbf{a}_{\iota(\sigma)(i)}$.
\end{remark}
A factorization of $F(X,Z)$ satisfying the conclusion of Lemma \ref{lemma:indexing} is called a \textit{proper factorization}. For each field $M_i$, let $S_i$ denote the set of places in $M_i$ which lie above a place of $S$, and let $T$ denote the set of places in $M$ which lie above a place of $S$. A proper factorization of $F$ is called \textit{$S$-proper} if
\begin{enumerate}[\quad (1)]
\item $D(F) \in \OS^\times$,
\item $\lambda = 1$,
\item $\mathbf{a}_i \in \mathscr{O}_{S_i}^2$ for each $i$,
\item the $\mathscr{O}_{S_i}$-ideal $(\mathbf{a}_i) = (\alpha_i, \beta_i)\mathscr{O}_{S_i}$ is $\mathscr{O}_{S_i}$.
\end{enumerate}
By \cite[Lemma 1]{Smart:1997}, each $\OS$-equivalence class of binary forms of degree $r$ with good reduction outside $S$ contains a representative $F$ which has an $S$-proper factorization.

\subsection{Arithmetic data of binary forms}
We keep $K = \Q$, $S = S_\Q$ in this section. Throughout, let $F(X,Z) \in \OS[X,Z]$ be a binary form of degree $r$ with $D(F) \in \OS^\times$, with a known $S$-proper factorization $F = \prod_i \langle \mathbf{a}_i, \mathbf{X} \rangle$. For any indices $i,j$ with $1 \leq i,j \leq r$, define
\[ \Delta_{i,j} = \det( \mathbf{a}_i \; \mathbf{a}_j) = (\alpha_i \beta_j - \alpha_j \beta_i). \]
We have $\Delta_{i,j} = -\Delta_{j,i}$ and $\Delta_{i,i} = 0$. Moreover, each $\Delta_{i,j} \in \mathscr{O}_T^\times$ if $i \neq j$, and $D(F) = \pm \prod_{i\neq j} \Delta_{i,j}$. We let $\bDelta(F) = (\Delta_{i,j}) \in M_{r \times r}(\OT)$. This is called the \emph{companion matrix} of the $S$-proper factorization.

For any index $i$ with $1 \leq i \leq r$, we also define
\[ \Omega_i = \prod_{k \neq i} \Delta_{i,k} \in \OT^\times. \]
In fact, we may be sure $\Omega_i \in \mathscr{O}_{S_i}^\times$: if $\sigma$ acts trivially on $M_i$, then it fixes $\mathbf{a}_i$, and so $\Delta_{i,j}^\sigma = \Delta_{i,\sigma(j)}$. Thus, $\Omega_i^\sigma = \prod_{k \neq i} \Delta_{i,k}^\sigma = \prod_{k \neq i} \Delta_{i,\sigma(k)} = \Omega_i$.

Let $\{ \rho_j \}_{j=1}^t$ be a $\Z$-basis for the torsion-free part of $\OT^\times$. Then there exists a root of unity $\zeta_i \in M$ and $a_{j,i} \in \Z$ such that
\begin{equation}\label{eq:Omega}
\Omega_i = \zeta_i \cdot \prod_j \rho_{j,i}^{a_{j,i}}.
\end{equation}
For a particular $S$-proper factorization, we have little control over the exponents $a_{j,i}$. However, up to $\OS$-equivalence, we are guaranteed small exponents for the values $\Omega_1, \dots, \Omega_m$.
\begin{lemma}\label{lemma:SmartLemma2}
Suppose $F'$ is a binary form of degree $r$ with $D(F') \in \OS^\times$. Then $F'$ is $\OS$-equivalent to a binary form $F$ with an $S$-proper factorization for which the exponents $a_{j,i}$ of \eqref{eq:Omega} simultaneously satisfy
\[ 0 \leq a_{j,i} < (r-2)(2r-2) \]
for every $1 \leq i \leq m$.
\end{lemma}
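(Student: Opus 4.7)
The plan is to first invoke \cite[Lemma 1]{Smart:1997}, which replaces $F'$ with an $\OS$-equivalent form $F$ admitting an $S$-proper factorization $F = \prod_{i=1}^r \langle \mathbf{a}_i, \mathbf{X}\rangle$, and then to normalize $F$ further by exploiting the freedom to rescale each $\mathbf{a}_i$ by a unit. Specifically, given any tuple $(u_1,\ldots,u_m) \in \prod_{i=1}^m \mathscr{O}_{S_i}^\times$, one extends by Galois via $u_i^\sigma := u_{\sigma(i)}$ for $i > m$; the resulting vectors $u_i \mathbf{a}_i$ give an $S$-proper factorization of $\bigl(\prod_{k=1}^r u_k\bigr) F = \bigl(\prod_{k=1}^m \mathbf{N}_{M_k/\Q}(u_k)\bigr)\cdot F$, which is $\OS$-equivalent to $F$.

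Since $\Delta_{i,k} \mapsto u_i u_k \Delta_{i,k}$, the transformation rule
\[
\Omega_i \;\longmapsto\; u_i^{r-2}\Bigl(\prod_{k=1}^r u_k\Bigr)\Omega_i \;=\; u_i^{r-2}\Bigl(\prod_{k=1}^m \mathbf{N}_{M_k/\Q}(u_k)\Bigr)\Omega_i
\]
follows by direct calculation. Fix a basis element $\rho_j$ of the torsion-free part of $\mathscr{O}_T^\times$ and consider the exponent vector $\mathbf{b}^{(j)} := (a_{j,1},\ldots,a_{j,m}) \in \Z^m$. Taking $u_{i_0}$ to be an integer power of $\mathbf{N}_{M/M_{i_0}}(\rho_j) \in \mathscr{O}_{S_{i_0}}^\times$ (and $u_i = 1$ for $i \ne i_0$), and using the identity $\mathbf{N}_{M_{i_0}/\Q}\circ\mathbf{N}_{M/M_{i_0}} = \mathbf{N}_{M/\Q}$, one verifies that the collection of shifts of $\mathbf{b}^{(j)}$ forms a sublattice $\Lambda_j \subseteq \Z^m$ containing the basic vectors $(r-2)e_{i_0} + r_{i_0}\mathbf{1}$, where $r_{i_0} = [M_{i_0}:\Q]$ and $\mathbf{1}=(1,\ldots,1)$.

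The key lattice identity then concludes the argument. Since $\sum_{i=1}^m r_i = r$, the sum of all $m$ generators equals $(r-2)\mathbf{1} + r\mathbf{1} = (2r-2)\mathbf{1}$, and consequently
\[
(r-2)(2r-2)\, e_i \;=\; (2r-2)\bigl((r-2)e_i + r_i\mathbf{1}\bigr) - r_i(2r-2)\mathbf{1} \;\in\; \Lambda_j.
\]
Thus each coordinate $a_{j,i}$ may be reduced independently to $[0,(r-2)(2r-2))$ by a translation in $\Lambda_j$; performing this reduction simultaneously over all $j$ (which is possible since the $\rho_j$ are $\Z$-independent in $\mathscr{O}_T^\times/\mathrm{torsion}$) yields the desired form.

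The main obstacle will be the careful verification that the basic shifts $(r-2)e_{i_0} + r_{i_0}\mathbf{1}$ really lie in $\Lambda_j$—that is, that $\mathbf{N}_{M/M_{i_0}}(\rho_j)$ has precisely the needed $\rho_{j'}$-exponents—and the checking that $u_{i_0}$ can be chosen in $\mathscr{O}_{S_{i_0}}^\times$ rather than merely in $\mathscr{O}_T^\times$. This is the step that makes essential use of the Galois-equivariance condition $u_i^\sigma = u_{\sigma(i)}$ and of the precise action of $\Gal(M/\Q)$ on the basis $\{\rho_j\}$, and it is here that the factor $(2r-2)$ beyond the naive modulus $(r-2)$ arises.
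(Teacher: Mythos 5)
The paper's own ``proof'' of this lemma is a bare citation to Lemma~2 of \cite{Smart:1997}, so there is no in-paper argument to compare against; your proposal is a reconstruction attempt. The general framework you use is the right one: scale each $\mathbf{a}_i$ by a unit $u_i \in \mathscr{O}_{S_i}^\times$ extended Galois-equivariantly, observe that the $S$-proper factorization and $\OS$-equivalence class are preserved, derive the transformation law $\Omega_i \mapsto u_i^{r-2}\bigl(\prod_k u_k\bigr)\Omega_i$, and then run a lattice-reduction argument on the exponent vectors. The transformation formula and the final lattice manipulation (producing $(r-2)(2r-2)e_i$ from the proposed generators $(r-2)e_{i_0}+r_{i_0}\mathbf{1}$ and their sum $(2r-2)\mathbf{1}$) are both correct as algebra.

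The gap is in the central lattice claim, and you correctly identify the location but understate its severity: it is not a technical verification left to the reader, but an assertion that is false in general. You claim that taking $u_{i_0}$ a power of $\mathbf{N}_{M/M_{i_0}}(\rho_j)$ shifts $\mathbf{b}^{(j)}$ by multiples of $(r-2)e_{i_0}+r_{i_0}\mathbf{1}$. For the shift to $a_{j,i}$ with $i\neq i_0$ to be $r_{i_0}$, you would need the $\rho_j$-exponent of $\nu = \mathbf{N}_{M/\Q}(\rho_j)$ to equal $r_{i_0}$; but $\nu$ does not depend on $i_0$, so this cannot hold for every $i_0$ unless all the degrees $r_{i_0}$ coincide. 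For the $e_{i_0}$-component you would need the $\rho_j$-exponent of $\mathbf{N}_{M/M_{i_0}}(\rho_j)$ to be exactly $1$; but $\Gal(M/\Q)$ acts on the free part of $\OT^\times$ by an arbitrary matrix in $GL_t(\Z)$, not by a permutation of the chosen basis $\{\rho_j\}$, so that exponent could be any integer, including $0$ (e.g.\ when $\rho_j$ is a relative unit), in which case the achievable $\Lambda_j$-shifts in the $i_0$-direction collapse. Finally, even granting the generators, the concluding ``reduce each $j$ independently'' step does not go through: a single choice of $u_{i_0}$ shifts \emph{all} coordinates $a_{j',i_0}$ at once by the various $\rho_{j'}$-exponents of $u_{i_0}^{r-2}\nu$, so the $j$-coordinates are coupled and one must work with full exponent vectors in $\Z^t$ rather than one $\rho_j$-slot at a time. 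A correct version of this argument needs a different (and more carefully justified) choice of scalings.
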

\begin{proof}
This is Lemma 2 of \cite{Smart:1997}.
\end{proof}

When $r \geq 4$, we also define cross ratios of $F(X,Z)$ as follows. For any choice of pairwise distinct indices $i,j,k,\ell$, the \emph{cross ratio} $[i,j,k,\ell]$ is the quantity
\[ [i, j, k, \ell] = \frac{\Delta_{i,j}\Delta_{k,\ell}}{\Delta_{i,k}\Delta_{j,\ell}} \in \mathscr{O}_T^\times. \]
Directly from the definition of $\Delta_{i,j}$, we see the cross ratios satisfy the identity
\begin{equation}\label{eq:cross ratio}
[i, j, k, \ell] + [k, j, i, \ell] = 1.
\end{equation}
Thus, in any $S$-proper factorization, the cross ratios are $T$-units which satisfy the equation $x + y = 1$. The classical result of Siegel \cite{Siegel:1929} implies each cross ratio has only finitely many possible values. Gy\H{o}ry provided the first effective bounds for the number of such solutions \cite{Gyory:1979}, using Baker's method \cite{Baker:1967}. Smart demonstrated algorithms for finding all solutions in certain cases.

Fix indices $i, j$ with $1 \leq i, j \leq r$, $i \neq j$. From \cite[pg.~276]{Smart:1997}, we have the equality
\begin{equation}
\label{delta equation}
\Delta_{i,j}^{(r-1)(r-2)}= (-1)^r \frac{ (\Omega_i \Omega_j)^{(r-1)} \prod^* [i, j, k, \ell] }{ \Omega_1 \cdots \Omega_r},
\end{equation}
where $\prod^*$ is taken over the pairs $(k,\ell)$ for which $k \neq i, j$ and $\ell \neq i, j, k$. From this formula, the values $\Delta_{i,j}$ may be recovered from the values of the $\Omega_i$ and the cross ratios. By Lemma \ref{lemma:SmartLemma2} and the finiteness of $T$-unit equation solutions, it follows that, up to $\OS$-equivalence, there are only finitely many choices for the companion matrix $\bDelta(F)$ of a binary form $F$ of degree $r$ satisfying $D(F) \in \OS^\times$.

\begin{remark}
Recall that we have identified $\Gal(M/\Q)$ with a subgroup of $\Sigma_r$ in such a way that $\mathbf{a}_i^\sigma = \mathbf{a}_{\sigma(i)}$ for every $1 \leq i \leq r$ and every $\sigma \in \Gal(M/\Q)$. As a consequence, the quantities $\Delta_{i,j}$, $\Omega_i$, and $[i,j,k,\ell]$ respect analogous formulas:
\[ \Delta_{i,j}^\sigma = \Delta_{\sigma(i), \sigma(j)}, \quad \Omega_i^\sigma = \Omega_{\sigma(i)}, \quad [i,j,k,\ell]^\sigma = [\sigma(i), \sigma(j), \sigma(k), \sigma(\ell)]. \]
\end{remark}

\section{Isomorphism classes of Picard curves}\label{sec:PicardCurves}
Again let $k_0$ be a field with characteristic not $2$ or $3$, and let $\mathcal{C}_0/k_0$ be a Picard curve. Let $k/k_0$ be an extension containing a primitive cube root of unity, and set $\mathcal{C} = \mathcal{C}_0 \times_{k_0} k$. By the Riemann-Hurwitz formula, the trigonal covering $\mathcal{C} \to \P^1$ branches at exactly $5$ points, and by Kummer theory, the extension $k(\mathcal{C})/k(x)$ corresponding to the trigonal map must be generated by the cube root of some element $f \in k(x)$. Thus, we may choose $y \in k(\mathcal{C})$ such that $y^3 = f(x)$ and $k(\mathcal{C}) = k(x,y)$. We may scale the rational function $f(x)$ by any cube in $k(x)$, and so without loss of generality we may take $f$ in the form
\[ f(x) = \lambda \prod_{i=1}^N (x - \alpha_i)^{a_i}, \qquad \lambda \in k^\times, \quad 1 \leq a_i \leq 2, \quad 4 \leq N \leq 5. \]
Since $f$ is a polynomial, $\infty$ is a branch point if and only if $3 \nmid \deg f = \sum_i a_i$. Replacing $f$ by $\prod_i (x - \alpha_i)^3 / f$ if necessary, we may assume $\deg f \leq 6$. Thus, an affine model for $\mathcal{C}$ has one of the forms
\begin{subequations}
\begin{align}
y^3 & = \lambda(x - \alpha_1)(x - \alpha_2)(x - \alpha_3)(x - \alpha_4), \label{eq:aff_quartic} \\
y^3 & = \lambda(x - \alpha_1)^2(x - \alpha_2)(x - \alpha_3)(x - \alpha_4), \\
y^3 & = \lambda(x - \alpha_1)^2(x - \alpha_2)(x - \alpha_3)(x - \alpha_4)(x - \alpha_5),
\end{align}
\end{subequations}
with $\alpha_i \in \overline{k}$. As the trigonal cover is defined over $k_0$, it follows that the branch locus is $G_{k_0}$-stable. Consequently, the monic polynomial $\lambda^{-1}f$ in fact lies in $k_0[x]$, and due to the appearance of the repeated root in the latter two forms, we are guaranteed that at least one branch point of $\mathcal{C}$ is defined over $k_0$.

Suppose that $\phi$ is an automorphism of $k(x)$ (i.e., a fractional linear transformation of $\P^1_{k}$), and set $g = \phi(f)$. Then $\phi$ extends to an isomorphism
\[ k(x, \sqrt[3]{f}) \longrightarrow k(x, \sqrt[3]{g}), \]
giving an isomorphism between the affine models $y^3 = f$ and $y^3 = g$. As the group $\Aut(\P^1_{k_0}) \cong PGL_2(k_0)$ is $3$-transitive, we may always choose $\phi$ so that the $k_0$-rational branch point for the affine model $y^3 = g$ occurs at $\infty$. Consequently, we may assume that $\mathcal{C}$ has a projective equation of the form
\[ \lambda Y^3 Z = Q(X,Z), \qquad Q \in k_0[X,Z] \]
where $Q$ is a squarefree quartic form. In order to classify Picard curves with prescribed reduction up to isomorphism, we must understand how an isomorphism of (integral separated models of) Picard curves translates into equivalence of binary forms.

\subsection{Picard curves over number fields}
Let $K$ be a number field. By \cite{Estrada-Sarlabous:1995}, a Picard curve $\mathcal{C}/K$ always has a smooth projective model (called a \emph{separated form}) with equation $Y^3Z = F(X,Z)$, where
\[ F(X,Z) = a_4 X^4 + a_3 X^3 Z + a_2 X^2 Z^2 + a_1 X Z^3 + a_0 Z^4, \qquad a_4 \neq 0, \; a_i \in K, \]
is a quartic binary form in $K[X,Z]$. There is a corresponding affine model,
\[ y^3 = F(x,1), \]
which omits the single point $(0:1:0)$ at infinity.

The curve $\mathcal{C}$ is said to have \emph{good reduction} at the prime $\mathfrak{p}$ if the special fiber $\mathcal{C}_\mathfrak{p}$ of $\mathcal{C}$ is nondegenerate and smooth. Let $S$ be a finite set of primes in $K$, and suppose $\mathcal{C}$ has good reduction at all primes $\mathfrak{p} \not\in S$. A \textit{normal form} for $\mathcal{C}$ is an $S$-integral model
\[ Y^3Z - F(X,Z) = 0, \]
satisfying $F \in \OS[X, Z]$, $D(F) \in \OS^\times$ and $a_4 = 1$.\footnote{We drop the condition $a_3 = 0$ from \cite{Estrada-Sarlabous:1995}.} By \cite[Lemma 7.5]{Estrada-Sarlabous:1995}, every Picard curve over $K$ with good reduction away from $S$ admits a normal form.

Unfortunately, even if $F, G \in \OS[X,Z]$ are $\OS$-equivalent binary forms with $D(F), D(G) \in \OS^\times$, it does not follow that the Picard curves
\[ \mathcal{C}_F \colon Y^3Z - F(X,Z) = 0, \qquad \mathcal{C}_G \colon Y^3Z - G(X,Z) = 0, \]
are isomorphic over $K$, even when $K$ possesses a cube root of unity. This may seem counter-intuitive, so we explain in a bit more detail. If $G = F_U$ for some matrix $U = ( \begin{smallmatrix} a & b \\ c & d \end{smallmatrix} ) \in GL_2(\OS)$, then set $f(x) = F(x,1)$, $g(x) = G(x,1)$. We have $f, g \in K(x)$, but the fractional linear transformation
\[ u \colon x \mapsto \frac{ax + b}{cx + d}, \qquad u \in \Aut( K(x) ), \]
corresponding to $U$ satisfies $g = (cx + d)^4 \cdot (f \circ u)$. Thus, we cannot expect the Kummer extensions of $K(x)$ generated by the cube roots of $f$ and $g$ to be isomorphic; but these are the function fields of $\mathcal{C}_F$ and $\mathcal{C}_G$, respectively!
\begin{example}
For an explicit example, the reader may take $F = X^4 - XZ^3$ and $U = (\begin{smallmatrix} 1 & 1 \\ 2 & 1 \end{smallmatrix})$. With $G = F_U$ we have $D(F) = D(G) = -27$, but $\mathcal{C}_F \not\cong \mathcal{C}_G$.
\end{example}
Notice, however, that if $c = 0$ we do obtain an isomorphism (up to a cubic twist by $d$). This motivates the next definition.

Let $R$ be a commutative ring with $1$, and let $F, G \in R[X,Z]$ be binary forms of degree $r$. We say $F$ and $G$ are \emph{$R^0$-equivalent}, and write $F \sim_{R^0} G$, if there exists $\lambda \in R^\times$ and
\[ U = \begin{pmatrix} a & b \\ 0 & d \end{pmatrix} \in GL_2(R) \]
such that $G = \lambda F_U$. Of course, $R^0$-equivalence implies $R$-equivalence, but the reverse is not generally true.
\subsection{Tschirnhaus transformations}
We wish to describe the automorphisms of $\P^2$ which induce isomorphisms of Picard curves. A \emph{Tschirnhaus transformation} is an automorphism of $\P^2_K$ given by an element $\tau \in PGL_3(K)$ of the form
\[ \begin{pmatrix} X \\ Y \\ Z \end{pmatrix} = \tau \begin{pmatrix} X' \\ Y' \\ Z' \end{pmatrix}, \quad \tau = \begin{pmatrix} u^3 & 0 & r \\ 0 & u^4 & 0 \\ 0 & 0 & 1 \end{pmatrix}, \qquad u \in K^\times, r \in K. \]
Fix a finite set of primes $S_\Q$ in $\Q$, and suppose $F, G \in \mathscr{O}_{S_\Q}[X,Z]$ are quartic forms for which $\mathcal{C}_F$ and $\mathcal{C}_G$ are isomorphic over $\Q$. Then the isomorphism may be given as a Tschirnhaus transformation $\tau$, with $u \in \mathscr{O}_{S_\Q}^\times$, $r \in \mathscr{O}_{S_\Q}$. (This follows from Lemma 7.5 and Remark 7.7 of \cite{Estrada-Sarlabous:1995}.) As a consequence, we have $G = F_U$, where
\[ U = \begin{pmatrix*}[l] 1 & u^{-3}r \\ 0 & u^{-3} \end{pmatrix*}. \]
Thus, $F$ and $G$ are $\mathscr{O}_{S_\Q}^0$-equivalent. Moreover, the converse statement is true, up to a cubic twist.
\begin{lemma}\label{c=0 means isomorphic}
Let $S_\Q$ be a finite set of places of $\Q$, with $\infty \in S$. Then there is a finite subset $A \subseteq \Q$ with the following property. Given any pair $F, G \in \mathscr{O}_{S_\Q}[X,Z]$ of $\mathscr{O}_{S_\Q}^0$-equivalent quartic binary forms, there exists $\alpha \in A$ such that the curve $\mathcal{C}_F$ is isomorphic over $\Q$ to $\mathcal{C}_{\alpha G}$.
\end{lemma}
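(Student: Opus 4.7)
My plan is to unpack the definition of $\mathscr{O}_{S_\Q}^0$-equivalence, track a direct change of variables, and identify the precise scalar obstruction to obtaining a $\Q$-isomorphism of Picard curves. First, I would write $G = \lambda F_U$ for some $\lambda \in \mathscr{O}_{S_\Q}^\times$ and
\[ U = \begin{pmatrix} a & b \\ 0 & d \end{pmatrix} \in GL_2(\mathscr{O}_{S_\Q}), \]
noting that $\det U = ad \in \mathscr{O}_{S_\Q}^\times$ forces both $a, d \in \mathscr{O}_{S_\Q}^\times$. Setting $f(x) = F(x,1)$ and $g(x) = G(x,1)$, homogeneity yields the identity
\[ g(x) \;=\; \lambda d^{4}\, f\!\left(\tfrac{a}{d}x + \tfrac{b}{d}\right). \]
The linear substitution $x \mapsto \tfrac{a}{d}x' + \tfrac{b}{d}$ is an automorphism of $\P^1$ fixing $\infty$, so by the Tschirnhaus formalism set up just above the lemma it extends (paired with a rescaling $y = c\,y'$) to an isomorphism of smooth projective models. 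The affine equation $y^3 = f(x)$ for $\mathcal{C}_F$ then transforms to $(y')^3 = g(x')/(c^3 \lambda d^4)$.

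Next, realizing this as a $\Q$-isomorphism $\mathcal{C}_F \cong_\Q \mathcal{C}_{\alpha G}$ requires $c, \alpha \in \Q^\times$ with $\alpha c^3 \lambda d^4 = 1$, which is solvable for $c$ exactly when $\alpha \equiv (\lambda d^4)^{-1} \pmod{(\Q^\times)^3}$. Thus the entire problem reduces to building a fixed finite set $A$ that contains at least one representative of the class $(\lambda d^4)^{-1}$ in $\Q^\times / (\Q^\times)^3$ for every admissible pair $(\lambda, d)$. Since $\lambda, d \in \mathscr{O}_{S_\Q}^\times$, we always have $\lambda d^4 \in \mathscr{O}_{S_\Q}^\times$; by Dirichlet's $S$-unit theorem this group is finitely generated, so its image $H$ in $\Q^\times / (\Q^\times)^3$ is a finite abelian group. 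Taking $A$ to be any set of lifts into $\mathscr{O}_{S_\Q}^\times$ of the elements of $H$ completes the construction, since $H$ is closed under inversion.

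I do not foresee any serious obstacle. The one step that deserves care is verifying that the composite change of variables genuinely extends from the affine to the smooth projective model, or equivalently that it matches an honest Tschirnhaus transformation
\[ \tau = \begin{pmatrix} u^3 & 0 & r \\ 0 & u^4 & 0 \\ 0 & 0 & 1 \end{pmatrix}, \qquad u \in \mathscr{O}_{S_\Q}^\times, \; r \in \mathscr{O}_{S_\Q}, \]
of the type introduced immediately above the lemma --- this is exactly where the hypothesis that the lower-left entry of $U$ vanishes is essential. As a sanity check in the case $S_\Q = \{3, \infty\}$ of interest, $\mathscr{O}_{S_\Q}^\times = \{\pm 3^k : k \in \Z\}$ and $H = \{1, 3, 9\}$ (since $-1 = (-1)^3$ is already a cube), reproducing the three cubic twist classes $\alpha \in \{1, 3, 9\}$ predicted by the Main Theorem.
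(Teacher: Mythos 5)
Your argument is correct and follows essentially the same route as the paper: both track the scalar factor produced by the upper-triangular change of variables (your $\lambda d^4$ and the paper's $\lambda d$ differ by the cube $d^3$, so the induced class in $\Q^\times/(\Q^\times)^3$ is the same), and both choose $\alpha$ to kill that class. The paper writes $A$ concretely as $\{\pm\prod p_i^{a_i}: 0\le a_i\le 2\}$ and verifies the isomorphism by an explicit $\phi\in PGL_3(\Q)$, whereas you obtain finiteness of $A$ more abstractly from Dirichlet's $S$-unit theorem and leave the projective extension of the affine change of variables slightly implicit; these are cosmetic, not substantive, differences.
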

\begin{proof}
Suppose $S_\Q = \{\infty, p_1, \dots, p_s \}$. Set
\[ A := \left\{ \pm \prod_{i=1}^s p_i^{a_i} : 0 \leq a_i \leq 2 \right\}. \]
As $F$, $G$ are $\mathscr{O}_{S_\Q}^0$-equivalent, there exists $\lambda \in \mathscr{O}_{S_\Q}^\times$ and
\[ U = \begin{pmatrix} a & b \\ 0 & d \end{pmatrix} \in GL_2(\mathscr{O}_{S_\Q}) \]
such that $G = \lambda F_U$. Note $a, d \in \mathscr{O}_{S_\Q}^\times$. Consequently, $\lambda d \in \mathscr{O}_{S_\Q}^\times$ also. Thus, there exists $\alpha \in A$ such that $\alpha \lambda d \in \Q^{\times 3}$. Let $\xi$ denote the rational cube root of $\alpha \lambda d$. We define
\[ \phi = \begin{pmatrix} a & 0 & b \\ 0 & \xi^{-1} & 0 \\ 0 & 0 & d \end{pmatrix} \in PGL_3(\Q). \]
The transformation $\phi$ gives an automorphism of $\P^2$; we need only check that it satisfies $\phi(\mathcal{C}_{\alpha G}) = \mathcal{C}_F$. If $P = (x : y : z) \in \mathcal{C}_{\alpha G}$, then
\[ \phi(P) = (ax + bz : \xi^{-1}y : dz). \]
Keeping in mind that $P$ satisfies $y^3 z - \alpha \lambda F_U(x, z) = 0$, we verify that $\phi(P)$ lies on $\mathcal{C}_F$. Let $\Psi = Y^3Z - F(X,Z) \in \Q[X,Y,Z]$. Then $\phi(P) \in \mathcal{C}_F$ (by definition) if $\Psi(\phi(P)) = 0$. As
\[ \begin{split}
\Psi(\phi(P)) & = (\xi^{-1}y)^3 \cdot (dz) - F(ax + bz, dz) \\
& = \frac{y^3 dz}{\alpha \lambda d} - \frac{ \alpha \lambda}{\alpha \lambda} F_U(x,z) \\
& = (\alpha \lambda)^{-1} (y^3 - \alpha \lambda F_U(x, z)) = 0,
\end{split} \]
we see $\phi(\mathcal{C}_{\alpha G}) \subseteq \mathcal{C}_F$.

Similarly, we verify $\phi^{-1}(\mathcal{C}_F) \subseteq \mathcal{C}_{\alpha G}$. First, any point $Q = (x_1 : y_1 : z_1) \in \P^2$ satisfies
\[ \phi^{-1}(Q) = \left( \frac{dx_1 - bz_1}{ad} : \xi y_1 : \frac{z_1}{d} \right). \]
Presuming $Q \in \mathcal{C}_F$, it is routine to show $\phi^{-1}(Q) \in \mathcal{C}_{\alpha G}$. Thus, the restriction of $\phi$ to $\mathcal{C}_{\alpha G}$ is an explicit isomorphism $\mathcal{C}_{\alpha G} \to \mathcal{C}_F$.
\end{proof}

\begin{corollary}\label{cor:PicardCorrespondence}
Up to the cubic twists by $\alpha \in A$, we have a one-to-one correspondence
\[ \mathfrak{PC} := \left\{ \begin{array}{c} \text{$\Q$-isomorphism classes} \\ \text{of Picard
    curves $\mathcal{C}/\Q$} \\ \text{good away from $\SQ$} \end{array} \right\}_{/\text{twists}}
\quad \hspace*{-5ex} \stackrel{\cong}{\longleftrightarrow} \quad \left\{
\begin{array}{c} \text{$\OSQ^0$-equivalence classes} \\
  \text{of binary quartic forms} \\ \text{good away from $\SQ$} \end{array}
\right\}. \]
\end{corollary}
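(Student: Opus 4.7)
The plan is to send each twist class of Picard curves $[\mathcal{C}]$ to the $\OSQ^0$-equivalence class of a normal form for a representative curve, and to verify that the resulting map $\Phi$ is well-defined, injective, and surjective. Existence of such a normal form $F \in \OSQ[X,Z]$ with $D(F) \in \OSQ^\times$ is guaranteed by \cite[Lemma 7.5]{Estrada-Sarlabous:1995}, so we set $\Phi([\mathcal{C}]) := [F]_{\OSQ^0}$.

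For well-definedness, one must check independence of both the choice of normal form within an isomorphism class and the choice of representative curve within a twist class. If $F_1, F_2$ are two normal forms for the same $\mathcal{C}$, then the identity on $\mathcal{C}$ provides a $\Q$-isomorphism $\mathcal{C}_{F_1} \cong \mathcal{C}_{F_2}$, which by the Tschirnhaus characterization already invoked above (see \cite[Lemma 7.5, Remark 7.7]{Estrada-Sarlabous:1995}) is realized by a matrix $U \in GL_2(\OSQ)$ of the upper-triangular shape required for $\OSQ^0$-equivalence, so $F_1 \sim_{\OSQ^0} F_2$. If $\mathcal{C}' = \mathcal{C}^{(\alpha)}$ is a cubic twist by some $\alpha \in A$, then $\mathcal{C}'$ admits the model $Y^3Z = \alpha F(X,Z)$, and $\alpha F \sim_{\OSQ^0} F$ trivially by taking $\lambda = \alpha \in \OSQ^\times$ and $U = I$. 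Hence $\Phi$ descends to the quotient by twists.

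Injectivity is then essentially the content of Lemma \ref{c=0 means isomorphic}: if $F \sim_{\OSQ^0} G$, the lemma produces $\alpha \in A$ with $\mathcal{C}_F \cong \mathcal{C}_{\alpha G}$ over $\Q$, so $[\mathcal{C}_F]$ and $[\mathcal{C}_G]$ lie in the same twist class. Surjectivity is immediate: any $F \in \OSQ[X,Z]$ of degree $4$ with $D(F) \in \OSQ^\times$ is separable, so $Y^3Z = F(X,Z)$ defines a Picard curve $\mathcal{C}_F$ whose reduction mod any $\mathfrak{p} \notin \SQ$ is again smooth, and $\Phi([\mathcal{C}_F]) = [F]$. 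The real weight of the argument is carried by Lemma \ref{c=0 means isomorphic} together with the Tschirnhaus description of $\Q$-isomorphisms of Picard curves imported from \cite{Estrada-Sarlabous:1995}; the one point still requiring attention is the inclusion $A \subseteq \OSQ^\times$, which is immediate from the definition of $A$ in the proof of Lemma \ref{c=0 means isomorphic} and is precisely what makes the quotient by twists on the left compatible with $\OSQ^0$-equivalence on the right.
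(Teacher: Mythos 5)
Your overall strategy---define $\Phi$ by sending a curve to the class of its normal form, verify well-definedness via the Tschirnhaus description of $\Q$-isomorphisms imported from \cite{Estrada-Sarlabous:1995}, deduce injectivity from Lemma \ref{c=0 means isomorphic}---is precisely the argument the paper leaves implicit, and those parts are fine. The problem is the surjectivity step.

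It is not true that every quartic $F \in \OSQ[X,Z]$ with $D(F) \in \OSQ^\times$ arises as the normal form of a Picard curve good away from $\SQ$; one also needs the leading coefficient $a_4$ to lie in $\OSQ^\times$. If $a_4 = 0$ then $Z \mid F$, so $Y^3Z - F(X,Z) = Z\bigl(Y^3 - F/Z\bigr)$ is reducible and $\mathcal{C}_F$ is not a Picard curve at all: for $\SQ = \{3,\infty\}$ the form $F = Z(X^3 - Z^3)$ has $D(F) = -27 \in \OSQ^\times$ but $a_4 = 0$. More generally, if $0 \neq a_4 \notin \OSQ^\times$ the model degenerates modulo any prime $p \mid a_4$ with $p \notin \SQ$, because the fifth branch point at infinity collides there with a root of $F$, even though $D(F) \not\equiv 0 \pmod{p}$. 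Since $a_4(\lambda F_U) = \lambda a^4\, a_4(F)$ for upper-triangular $U = (\begin{smallmatrix} a & b \\ 0 & d \end{smallmatrix})$, the ideal $a_4\OSQ$ is an invariant of the $\OSQ^0$-equivalence class, so these obstructed classes really are missed by $\Phi$. The accurate statement is that $\Phi$ is a bijection onto the $\OSQ^0$-classes of quartics with $a_4 \in \OSQ^\times$ (equivalently, those containing a representative with $a_4 = 1$). To be fair, the paper's own statement elides the same condition---harmlessly downstream, because the quintic-linear pairs of Proposition \ref{QL a and b} force $a_4(F) \in \OSQ^\times$ via $D(ZF) = D(F)\,a_4(F)^2$---but a self-contained proof should make it explicit rather than assert surjectivity onto all quartics with unit discriminant.
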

\begin{remark}
In the case $S_\Q = \{3, \infty \}$, we may take $A = \{1, 3, 9 \}$. The distinct twists in this case will be isomorphic over the splitting field of $x^3 - 3$.
\end{remark}

\subsection{Quintic-Linear Pairs}\label{quintic-linear pairs}

Unfortunately, a new problem arises: Smart's methods to determine a representative from each $\OS$-equivalence class of binary forms are not guaranteed to produce one representative from each $\OS^0$-equivalence class! In this section, we demonstrate how to compute $\OS^0$-equivalence classes of quartics by rephrasing the problem in terms of $\OS$-equivalence of quintic forms. In brief, we construct a finite-to-one correspondence between the $\OS^0$-equivalence classes of quartic forms with good reduction away from $S$, and a certain subset of the set of $\OS$-equivalence classes of quintic forms with good reduction away from $S$. This latter set can be computed by Smart's methods.

Let $\mathfrak{B}_r$ denote the set of $\OS$-equivalence classes of binary forms of degree $r$ defined over $K$ with good reduction outside $S$, and let $\mathfrak{B}_r^0$ denote the set of $\OS^0$-equivalence classes of such forms. Define the following subset of $\mathfrak{B}_r$:
\[ \mathfrak{B}_{r,1} := \{ [F] \in \mathfrak{B}_r : \text{there exists } L \in K[X,Z], L \mid F, \deg L = 1 \}. \]

Suppose $Q, L \in \OS[X,Z]$ are square-free binary forms with $\deg Q = 5$ and $\deg L = 1$. If $L \mid Q$, we call $(Q, L)$ a \emph{quintic-linear pair}. Two such pairs $(Q,L)$ and $(Q',L')$ are \emph{$\OS$-equivalent} if there exists $\lambda \in \OS^\times$ and $U \in GL_2(\OS)$ such that $Q' = \lambda Q_U$ and $L' = L_U$. This defines an equivalence relation on the set of quintic-linear pairs with good reduction away from $S$. Let $\mathfrak{QL}$ denote the set of $\OS$-equivalence classes of such pairs.
\begin{remark}
Any particular linear factor $L$ of $Q$ is unique only up to $\OS$-unit. However, if $\mu \in \OS^\times$, notice $(Q,L)$ and $(Q, \mu L)$ are $\OS$-equivalent by the choice $\lambda = \mu^{-5}$ and $U = (\begin{smallmatrix} \mu & 0 \\ 0 & \mu \end{smallmatrix})$.
\end{remark}
Notice that the forgetful map $\Phi \colon \mathfrak{QL} \twoheadrightarrow \mathfrak{B}_{5,1}$ defined by $[(Q,L)] \mapsto [Q]$ is surjective and finite-to-one. as every fiber of $\Phi$ has at most $5$ classes. Also, given a class $[Q] \in \mathfrak{B}_{5,1}$, computing the classes in the fiber over $[Q]$ is equivalent to factoring $Q$ over $K$. In the remainder of this section, we will construct a convenient set of representatives $\mathfrak{Z}_{5,1}$ for the set $\mathfrak{QL}$, and demonstrate a bijection between $\mathfrak{B}^0_4$ and $\mathfrak{QL}$. The following diagram may help the reader:
\begin{equation*}
\begin{tikzcd}
\mathfrak{PC} \arrow{r}{\cong} & \mathfrak{B}_4^0 \arrow{r}{\cong} & \mathfrak{Z}_{5,1} \arrow{r}{\cong} & \mathfrak{QL} \arrow{d}[two heads]{\Phi} & {} \\
& & & \mathfrak{B}_{5,1} \arrow{r}{\subseteq} & \mathfrak{B}_5
\end{tikzcd}
\end{equation*}
The leftmost bijection is simply Corollary \ref{cor:PicardCorrespondence}, and the bottom right inclusion is obvious. The next proposition will establish the remaining bijections.
\begin{proposition}\label{QL a and b}
\begin{enumerate}[(a)]
\item Every class in $\mathfrak{QL}$ contains a representative of the form $(ZG, Z)$ for some quartic form $G \in \OS[X,Z]$.
\item Let $F, G \in \OS[X,Z]$ satisfy $D(F), D(G) \in \OS^\times$. Then $F$ and $G$ are $\OS^0$-equivalent quartics if and only if $(ZF, Z)$ and $(ZG, Z)$ are $\OS$-equivalent quintic-linear pairs.
\end{enumerate}
\end{proposition}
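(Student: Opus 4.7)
Given a representative $(Q, L)$ with $L = \alpha X + \beta Z$, good reduction of the pair forces good reduction of $L$, i.e., $(\alpha, \beta)\OS = \OS$. Pick $b, d \in \OS$ with $\alpha b + \beta d = 1$ and set
\[
U = \begin{pmatrix} -\beta & b \\ \alpha & d \end{pmatrix},
\]
which has determinant $-1$, so $U \in GL_2(\OS)$. A direct substitution shows $L_U = (\alpha b + \beta d)Z = Z$. Since $L \mid Q$ and $F \mapsto F_U$ is a ring homomorphism on $\OS[X,Z]$, the form $Z = L_U$ divides $Q_U$, so $Q_U = Z \cdot G$ for some $G$. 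Integrality $G \in \OS[X,Z]$ follows by reading off coefficients of $Q_U \in \OS[X,Z]$. Hence $(Q, L) \sim_{\OS} (ZG, Z)$ with this $U$ and $\lambda = 1$.

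\textbf{Plan for (b), forward direction.} Suppose $G = \lambda F_U$ with $U = \bigl(\begin{smallmatrix} a & b \\ 0 & d \end{smallmatrix}\bigr) \in GL_2(\OS)$ and $\lambda \in \OS^\times$; necessarily $a, d \in \OS^\times$. The naive try $V = U$ fails because $Z_U = dZ$, not $Z$. The fix uses homogeneity of $F$ in degree $4$:
\[
F_U(X,Z) = F(aX + bZ,\, dZ) = d^4\, F\bigl(\tfrac{a}{d}X + \tfrac{b}{d}Z,\, Z\bigr) = d^4 F_V(X,Z),
\]
where $V = \bigl(\begin{smallmatrix} a/d & b/d \\ 0 & 1 \end{smallmatrix}\bigr) \in GL_2(\OS)$ is well-defined because $d \in \OS^\times$. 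Now $Z_V = Z$, and $(ZF)_V = Z_V \cdot F_V = Z F_V$, so
\[
ZG = \lambda \cdot Z F_U = \lambda d^4 \cdot Z F_V = (\lambda d^4)\,(ZF)_V,
\]
witnessing $(ZF, Z) \sim_{\OS} (ZG, Z)$ via $(\lambda d^4, V)$.

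\textbf{Plan for (b), reverse direction, and assessment.} Conversely, if $(ZF, Z) \sim_{\OS} (ZG, Z)$ via $(\mu, V)$ for some $V = \bigl(\begin{smallmatrix} a' & b' \\ c' & d' \end{smallmatrix}\bigr) \in GL_2(\OS)$, the condition $Z_V = Z$ forces $c' = 0$ and $d' = 1$, so $V$ is already in the required upper-triangular shape. Then $(ZF)_V = Z\, F_V$, and $ZG = \mu Z F_V$ gives $G = \mu F_V$, which is precisely $\OS^0$-equivalence. The only genuinely delicate step is the homogeneity trick in the forward half of (b) — one must recognize that an upper-triangular $U$ can always be renormalized so its bottom-right entry equals $1$, by absorbing $d^4$ into the scalar; this is where the degree-$4$ homogeneity of $F$ and the unit $d \in \OS^\times$ do the real work. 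Part (a) is essentially mechanical once one invokes good reduction of the linear factor to solve $\alpha b + \beta d = 1$ in $\OS$.
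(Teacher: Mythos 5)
Your proof is correct and follows essentially the same approach as the paper's. In (a) you construct an explicit $U \in GL_2(\OS)$ sending $L$ to $Z$ via a B\'ezout identity from $(\alpha,\beta)\OS = \OS$ (the paper's matrix is $\bigl(\begin{smallmatrix}\beta & a\\ -\alpha & b\end{smallmatrix}\bigr)$, a cosmetic variant of yours), and in (b) your renormalization $V = \bigl(\begin{smallmatrix}a/d & b/d\\ 0 & 1\end{smallmatrix}\bigr)$ with scalar $\lambda d^4$ is exactly the paper's $Z_{d^{-1}U} = Z$ and $ZG = (\lambda d^4)(ZF)_{d^{-1}U}$; the reverse direction, reading off $c=0$, $d=1$ from $Z_U = Z$, is likewise identical.
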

\begin{proof}[Proof of (a)]
Suppose $(Q, L)$ is a quintic-linear pair and $Q$ has good reduction outside $S$. By scaling $L$ by an appropriate $\OS$-unit as necessary, we may assume $L$ is an $S$-proper linear form over $K$. Write $L(X,Z) = \alpha X + \beta Z$ with $\alpha, \beta \in \OS$. By definition, $(\alpha, \beta)\OS$ is the unit ideal in $\OS$, and so there exist $a, b \in \OS$ such that $a \alpha + b \beta = 1$. Setting
\[ U = \begin{pmatrix*} \beta & a \\ -\alpha & b \end{pmatrix*} \in GL_2(\OS), \]
we have $L_U = Z$. Consequently, $(Q,L)$ is $\OS$-equivalent to $(Q_U, Z)$, and $Z \mid Q_U$ as required.
\end{proof}
\begin{proof}[Proof of (b)]
First, suppose that $F$ and $G$ are $\OS^0$-equivalent quartic forms. Then there exists $\lambda \in \OS^\times$ and an upper-triangular matrix $U = (\begin{smallmatrix} a & b \\ 0 & d \end{smallmatrix}) \in GL_2(\OS)$ such that $G = \lambda F_U$. Clearly $d \in \OS^\times$, and notice that $Z_{d^{-1} U} = Z$. It is simple to verify
\[ ZG = (\lambda d^4) (ZF)_{d^{-1} U}, \]
and so $(ZF,Z)$ and $(ZG,Z)$ are $\OS$-equivalent quintic-linear pairs.

Now, suppose $(ZF,Z)$ and $(ZG,Z)$ are $\OS$-equivalent quintic-linear pairs with good reduction outside $S$, where $F$ and $G$ are quartic forms with good reduction outside $S$. Thus, there exist $\lambda \in \OS^\times$ and $U = (\begin{smallmatrix} a & b \\ c & d \end{smallmatrix} ) \in GL_2(\OS)$ such that
\[ ZG = \lambda (ZF)_U, \qquad Z = Z_U. \]
However, $Z_U = cX + dZ$, which implies $c = 0$ and $d = 1$. Since $(ZF)_U = Z_U F_U$, it follows also that $G = \lambda F_U$. Thus, $F$ and $G$ are $\OS^0$-equivalent.
\end{proof}
\begin{remark}
By part (a), we may select a representative of the form $(ZF, Z)$ from each class in $\mathfrak{QL}$; we construct the set $\mathfrak{Z}_{5,1}$ by selecting one such representative from each class. By definition, the sets $\mathfrak{QL}$ and $\mathfrak{Z}_{5,1}$ are in bijection, and by part (b), $\mathfrak{Z}_{5,1}$ is in bijection with $\mathfrak{B}^0_4$.  So the task of computing all Picard curves defined over $\mathbb{Q}$ with good reduction outside of $S$ is equivalent to finding $\mathfrak{Z}_{5,1}$ for each potential field system.  When $S=\{3,\infty\}$, each $F$ with $(ZF,Z)\in \mathfrak{Z}_{5,1}$ gives rise to the $3$ Picard curves $Y^3Z=\alpha F$ for $\alpha \in\{1,3,9\}$.
\end{remark}

\section{Quartic Forms up to $\OS$-Equivalence}\label{sec:QuarticSEquiv}

Notice that there cannot be an irreducible quintic form $F \in \Q[X,Z]$ with good reduction outside $S_\Q = \{3, \infty \}$, because there are no quintic fields satisfying Lemma \ref{lemma:small_nf_disc_3}. Therefore, following \cite[\S7]{Smart:1997}, we may build representatives of all $\OS$-equivalence classes of quintic forms inductively by first constructing representatives of all $\OS$-equivalence classes of quartic forms, then combining with linear forms.  The field systems we are concerned with are listed in Table \ref{table:field-systems}. Let $\mathscr{M}_0=(M_0,\dots,M_{m-1})$ be a field system with $M_i$ unramified away from $\{3, \infty \}$ for all $i$ and $ \sum_i [M_i : \mathbb{Q}] = 4$. For technical reasons, in this section we order the fields in the field system so that if $M_i=\mathbb{Q}$ for a unique $i$, then $i\neq 0$ (see the discussion after Lemma \ref{lemma:SmartLemma5}). Let $M$ be the Galois closure of the compositum of the fields $M_i$, let $S=S_{\mathbb{Q}}=\{3,\infty\}$, and let $T$ be the set of places of $M$ lying above the places in $S$.

We let $\brho = (\rho_0, \rho_1, \dots, \rho_t)$ be a list of generators for $\OT^\times$, chosen so that $\rho_0$ generates the torsion part, and $\{\rho_1, \dots, \rho_t\}$ is a $\Z$-basis for the free part. For any vector $\mathbf{a} = (a_0, a_1, \dots, a_t) \in \Z^{t+1}$, we use the shorthand
\[ \brho^\mathbf{a} := \prod_{j=0}^t \rho_j^{a_j}. \]
If $\varepsilon \in \OT^\times$ has the form $\varepsilon = \brho^\mathbf{a}$, then we call $\mathbf{a}$ the \emph{exponent vector} of $\varepsilon$. Since multiplication of $T$-units corresponds to the addition of exponent vectors, the following algorithm may be carried out entirely inside the lattice $\Z^{t+1}$, after step (1).

The construction of a set of quartic forms containing a representative of each $\mathscr{O}_S$-equivalence class of forms good away from $S$ proceeds most naturally by considering each possible field system in turn. So we introduce the following notation:
\[ \mathfrak{B}_r(\mathscr{M}_0) := \{ [F] \in \mathfrak{B}_r : F \text{ has field system } \mathscr{M}_0 \}. \]
The following algorithm computes a set of representatives, $\mathfrak{F}_4(\mathscr{M}_0)$, for the set $\mathfrak{B}_4(\mathscr{M}_0)$. There are 4 phases to constructing this set for each field system $\mathscr{M}_0$:

\begin{enumerate}[\bf 1.]
\item Determine a complete set of solutions $\mathfrak{T}(M)$ to the $T$-unit equation
\[ \tau_0 + \tau_1 = 1, \qquad \tau_i \in \OT^\times. \]
This is the most computationally expensive step, and is explained in detail in \S\ref{sec:SolvingSUnit}. For ease of computation in the later steps, we record a dictionary of exponent vectors. Because $\tau_0, \tau_1$ are related additively, there is no more efficient way to recover the exponent vector of $\tau_1$ given the exponent vector of $\tau_0$. The finite set $\mathfrak{T}(M)$ gives all possible values for a cross ratio appearing in \eqref{delta equation}.

\item Construct a finite collection $\Omega(\mathscr{M}_0)$ of possible lists $(\Omega_1, \dots, \Omega_r)$ of $T$-units which could arise from an $S$-proper factorization of a binary form with field system $\mathscr{M}_0$. It is only required to find such a list for one representative of each equivalence class. Thus, by Lemma \ref{lemma:SmartLemma2}, we may assume the exponents $a_{j,i}$ in \eqref{eq:Omega} are bounded. Moreover, there are Galois constraints on the $\Omega_i$: for any $\sigma \in \Gal(M/\Q)$, $\Omega_i^\sigma = \Omega_{\sigma(i)}$. This imposes various congruence conditions on the exponent vectors of the $\Omega_i$. As a practical matter for computation, this allows the collection $\Omega(\mathscr{M}_0)$ to be implemented as an iterator, rather than by a large and explicitly-computed list of lists.

\item Compute a finite set $\mathfrak{D}(\mathscr{M}_0)$ of matrices $\bDelta \in M_{r \times r}(\OT)$, which may possibly be the companion matrix to an $S$-proper factorization with field system $\mathscr{M}_0$. This proceeds as follows:
\begin{enumerate}[(a)]
\item Loop on $(\Omega_1, \dots, \Omega_r) \in \Omega(\mathscr{M}_0)$.
\item Loop on all possible choices for $[i,j,k,\ell] \in \mathfrak{T}(M)$. These are constrained by Galois compatibility and \eqref{eq:cross ratio}.
\item Check: Is the right hand side of \eqref{delta equation} a perfect $(r-1)(r-2)$-th power for all $1 \leq i < j \leq r$?
\begin{itemize}
\item If NO, iterate the loop in (b).
\item If YES, compute all possible $\bDelta$ which satisfy \eqref{delta equation}, and add them to the collection $\mathfrak{D}(\mathscr{M}_0)$.
\end{itemize}
\end{enumerate}
In practice, we may be more selective by also checking the Galois compatibility conditions on the entries of each $\bDelta$. Even still, there is no guarantee that any given $\bDelta \in \mathfrak{D}(\mathscr{M}_0)$ actually corresponds to an $S$-proper factorization.

\item[\bf 4.] Compute a set $\mathfrak{F}_4(\mathscr{M}_0)$ of binary forms with good reduction away from $S$, and with field system $\mathscr{M}_0$. The set will contain at least one representative from each $\OS$-equivalence class in $\mathfrak{B}_4(\mathscr{M}_0)$. The procedure is carried out as follows:
\begin{enumerate}[(a)]
\item Loop over $\bDelta \in \mathfrak{D}(\mathscr{M}_0)$.
\item Compute $\beta = \det B$ and and all possible $\Lambda$ from Lemma \ref{lemma:SmartLemma5} below.
\item Loop over $B' \in \mathscr{U}_\beta$, the finite computable set from Lemma \ref{lemma:UB} below.\footnote{But see the Remark at the end of this section.}  From $A' = B' \Lambda$ and $\bDelta$, construct the form $G = \prod \langle \mathbf{a}'_i, \mathbf{X} \rangle$.
\item Check whether $D(G) \in \OS^\times$. If so, add $G$ to the set $\mathfrak{F}_4(\mathscr{M}_0)$.
\end{enumerate}
\end{enumerate}

\begin{remark}
Step 4 depends on two lemmas from \cite{Smart:1997}, which for convenience we include here. We say two matrices $E, E' \in M_{2 \times 2}(\OS)$ are equivalent if there exists $U \in GL_2(\OS)$ such that $E = UE'$. For any $\delta \in \OS$, $\delta \neq 0$, define
\[ \mathscr{U}_\delta := \left\{ \begin{pmatrix} \theta & \psi \\ 0 & \phi \end{pmatrix} \in M_{2 \times 2}(\Z) : 0 < \theta \leq |\delta|_S, \theta \phi = |\delta|_S, 0 \leq \psi \leq \phi-1 \right\}. \]
\end{remark}
\begin{lemma}\label{lemma:UB}
Suppose $E \in M_{2 \times 2}(\OS)$, and let $\delta = \det(E)$. Then $E$ is equivalent to a matrix $E' \in \mathscr{U}_\delta$.
\end{lemma}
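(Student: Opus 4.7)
The plan is to regard Lemma \ref{lemma:UB} as the $S$-integer analogue of a $2\times 2$ Hermite normal form computation: starting from $E \in M_{2\times 2}(\OS)$, I would apply three successive left-multiplications by elements of $GL_2(\OS)$ to bring $E$ into the prescribed shape in $\mathscr{U}_\delta$. The essential ingredients are that $\OS = \mathbb{Z}[\tfrac{1}{p_1},\ldots,\tfrac{1}{p_r}]$ is a PID (so Bezout applies); that every nonzero element of $\OS$ has a unique expression as an $\OS$-unit times a positive integer coprime to the primes in $S$; and that, for any positive integer $\phi$ coprime to $S$, the natural ring map $\mathbb{Z} \to \OS/\phi\OS$ is an isomorphism.

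First, I would clear the $(2,1)$-entry. Writing $E = \bigl(\begin{smallmatrix} a & b \\ c & d \end{smallmatrix}\bigr)$ with $ad-bc = \delta$, Bezout in $\OS$ yields $u,v \in \OS$ with $ua+vc = g$, where $g\OS = (a,c)\OS$. The matrix $U_1 = \bigl(\begin{smallmatrix} u & v \\ -c/g & a/g \end{smallmatrix}\bigr) \in SL_2(\OS)$ then satisfies $U_1 E = \bigl(\begin{smallmatrix} \theta_0 & \psi_0 \\ 0 & \phi_0 \end{smallmatrix}\bigr)$ with $\theta_0 \phi_0 = \delta$.

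Second, I would scale the rows by $S$-units to force the diagonal into $\mathbb{Z}_{>0}$. Write $\theta_0 = u_1^{-1}\theta$ and $\phi_0 = u_2^{-1}\phi$ with $u_1,u_2 \in \OS^\times$ and $\theta,\phi \in \mathbb{Z}_{>0}$ coprime to $S$; left-multiplication by $\mathrm{diag}(u_1,u_2) \in GL_2(\OS)$ then transforms the working matrix into $\bigl(\begin{smallmatrix} \theta & \psi' \\ 0 & \phi \end{smallmatrix}\bigr)$ with $\psi' \in \OS$. The product $\theta\phi = u_1 u_2 \delta$ is simultaneously a positive integer coprime to $S$ and an $\OS^\times$-multiple of $|\delta|_S$, so its quotient by $|\delta|_S$ lies in $\OS^\times \cap \mathbb{Q}_{>0}$ and has trivial $p$-adic valuation at each $p \in S$; comparing $q$-adic valuations at every rational prime $q$ therefore forces $\theta\phi = |\delta|_S$.

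Finally, I would reduce $\psi'$ modulo $\phi$. Because $\phi$ is a positive integer coprime to $S$, the natural map $\mathbb{Z} \to \OS/\phi\OS$ is surjective with kernel $\phi\mathbb{Z}$, so there exists $k \in \OS$ with $\psi := \psi' - k\phi \in \{0,1,\ldots,\phi-1\}$; left-multiplication by $\bigl(\begin{smallmatrix} 1 & -k \\ 0 & 1 \end{smallmatrix}\bigr) \in GL_2(\OS)$ effects this change. The resulting matrix lies in $\mathscr{U}_\delta$: the identity $\theta\phi = |\delta|_S$ supplies both the required product and the bound $\theta \leq |\delta|_S$, positivity of $\theta$ follows from the choice in Step 2, and the range of $\psi$ was arranged in Step 3. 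I do not anticipate any real obstacle beyond careful bookkeeping of signs and units in the scaling step; since $\OS$ is a PID with an explicit unit group, each reduction is entirely algorithmic.
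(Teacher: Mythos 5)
Your proof is correct and takes essentially the same route as Smart's proof of \cite[Lemma 4]{Smart:1997}, to which the paper defers without reproducing an argument: a $2\times 2$ Hermite-normal-form reduction over the PID $\OS$, first killing the $(2,1)$-entry via Bezout, then normalizing the diagonal into positive integers coprime to $S$ by scaling with $S$-units (forcing $\theta\phi=|\delta|_S$ by the valuation comparison you give), and finally reducing the off-diagonal entry modulo $\phi$ using $\OS/\phi\OS\cong\Z/\phi\Z$. The only point worth flagging is the implicit hypothesis $\delta\neq 0$, which is built into the definition of $\mathscr{U}_\delta$ but not restated in the lemma and is needed in your Step 1 so that $(a,c)\neq(0,0)$ and the gcd $g$ is nonzero.
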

This is proven in \cite[Lemma 4]{Smart:1997}.
\begin{lemma}\label{lemma:SmartLemma5}
Suppose $F \in K[X, Z]$ has an $S$-proper factorization
\[ F = \prod_{i=0}^{r-1} \langle \mathbf{a}_i, \mathbf{X} \rangle, \qquad \mathbf{a}_i \in \OT^2, \]
with companion matrix $\bDelta$. Let $A = (\mathbf{a}_0 \; \mathbf{a}_1) \in GL_2(\OT)$. Then there exists $\Lambda \in GL_2(\OT)$ and $B = (\mathbf{b}_0 \; \mathbf{b}_1) \in GL_2(\OS)$ such that $A = B \Lambda$. Moreover, both $\Lambda$ and $\beta = \det B$ may be determined from the matrix $\bDelta$ alone.
\end{lemma}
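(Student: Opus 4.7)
The existence claim is nearly trivial: since $A = (\mathbf{a}_0 \; \mathbf{a}_1) \in GL_2(\OT)$ and $GL_2(\OS) \subseteq GL_2(\OT)$, one may take $B = I_2$ and $\Lambda = A$ to satisfy $A = B\Lambda$. The substantive content is the ``moreover'' clause: that a specific $\Lambda$ (and the accompanying $\beta = \det B$) can be computed directly from $\bDelta$, without access to the $\mathbf{a}_i$ themselves. I would establish this by an explicit Galois descent.

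The key starting observation is that, from the definitions of $\Delta_{0,i}$ and $\Delta_{1,i}$ as $2 \times 2$ determinants in the columns of $A$, one solves a linear system to obtain
\[ \mathbf{a}_i \;=\; \frac{\Delta_{0,i}}{\Delta_{0,1}}\,\mathbf{a}_1 \;-\; \frac{\Delta_{1,i}}{\Delta_{0,1}}\,\mathbf{a}_0 \qquad (2 \leq i \leq r-1). \]
Because $\Delta_{0,1} \in \OT^\times$, the coefficients $-\Delta_{1,i}/\Delta_{0,1}$ and $\Delta_{0,i}/\Delta_{0,1}$ lie in $\OT$ and can be read off directly from $\bDelta$. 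Thus every $\mathbf{a}_i$ with $i \geq 2$ is an explicit $\OT$-linear combination of the columns of $A$, whose coefficients depend only on $\bDelta$.

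Next, I would construct $\mathbf{b}_0, \mathbf{b}_1 \in \OS^2$ as Galois-invariant $\OT$-linear combinations of the $\mathbf{a}_i$, for instance partial sums $\sum_{i \in O} \mathbf{a}_i$ over suitable $\Gal(M/\Q)$-orbits $O$ on $\{0,\dots,r-1\}$. Because the Galois action permutes the $\mathbf{a}_i$ via the embedding into $\Sigma_r$ from Lemma \ref{lemma:indexing}, any such sum is fixed by $\Gal(M/\Q)$, hence lies in $\OT^2 \cap \Q^2 = \OS^2$. Substituting the displayed relation, each such $\mathbf{b}_j$ takes the form $c_{0,j}\mathbf{a}_0 + c_{1,j}\mathbf{a}_1$ with $c_{i,j} \in \OT$ explicit in the entries of $\bDelta$. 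Assembling $C = (c_{i,j})$, we have $B = AC$, so $\Lambda = C^{-1}$ and $\beta = \det B = \Delta_{0,1}\,\det C$ are both determined by $\bDelta$ alone.

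The main obstacle is to choose the Galois-invariant combinations so that the resulting $\mathbf{b}_0, \mathbf{b}_1$ are $\OS$-linearly independent and, moreover, $\det C \in \OT^\times$ (which ensures both $\Lambda \in GL_2(\OT)$ and $\det B \in \OS \cap \OT^\times = \OS^\times$). This requires a case-by-case check against the possible field systems listed in Table \ref{table:field-systems}, and it is precisely why the ordering convention is imposed: placing a unique rational factor away from index $0$ guarantees that two independent Galois-invariant combinations are available, and excludes the pathological degenerations. Once such a choice is fixed, $\Lambda = C^{-1}$ and $\beta$ are extracted directly from $\bDelta$, completing the proof.
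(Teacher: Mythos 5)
Your starting point is correct and matches the spirit of Smart's constructive proof: use the relation derived from $\bDelta$ to write every $\mathbf{a}_i$ as an $\OT$-combination of $\mathbf{a}_0$ and $\mathbf{a}_1$, then produce two $\OS^2$-vectors $\mathbf{b}_0, \mathbf{b}_1$ whose change-of-basis matrix $C$ depends only on $\bDelta$, so that $\Lambda = C^{-1}$ and $\beta = \Delta_{0,1}\det C$ are recovered. The difficulty is entirely in the middle step, and your specific proposal — raw partial sums $\sum_{i \in O}\mathbf{a}_i$ over $\Gal(M/\Q)$-orbits — does not close the gap you yourself flag.

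Here is a concrete failure. Take the (reordered) field system $(K_2, K_0, K_2, K_2)$ and suppose $F$ has a rational linear factor $X$ and an irreducible cubic factor $x^3 + px + q$ over $\Q$ (trace zero), so $\mathbf{a}_1 = (1,0)$ and $\mathbf{a}_0 = (1, -\alpha)$ with $\operatorname{Tr}_{K_2/\Q}(\alpha) = 0$. The orbit of $0$ is $\{0,2,3\}$, and the orbit sum is $\mathbf{a}_0+\mathbf{a}_2+\mathbf{a}_3 = (3, -\operatorname{Tr}\alpha) = (3,0)$, which is parallel to $\mathbf{a}_1$. So the two orbit sums are dependent, $\det C = 0$, and the construction does not produce an invertible $B$. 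This is not excluded by the ordering convention; it is a genuine degeneration of the naive sum. Your closing sentence ("placing a unique rational factor away from index $0$ guarantees that two independent Galois-invariant combinations are available, and excludes the pathological degenerations") is therefore not accurate as a justification, and the phrase "requires a case-by-case check" leaves the actual obstacle unresolved.

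What the paper (following Smart, Lemma 5 of that reference) actually does is finer: it works with $M_0$-linear combinations built from an integral basis $\theta_1,\dots,\theta_{n_0}$ of $\mathscr{O}_{M_0}$, not just the orbit sum (which corresponds to the single element $\theta = 1$). Writing $\mathbf{a}_0 = \sum_j \theta_j \mathbf{v}_j$ with $\mathbf{v}_j \in \Q^2$, the $\mathbf{v}_j$ must span a two-dimensional $\Q$-space — otherwise $\alpha_0/\beta_0 \in \Q$, forcing $M_0 = \Q$ — so two independent rational vectors are always available when $M_0 \neq \Q$. That is exactly why the reordering convention is imposed: not to make orbit sums independent, but to ensure $M_0 \neq \Q$ so that $\mathscr{O}_{M_0}$ has rank greater than one and this richer family of Galois-equivariant combinations exists. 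Your argument needs to be replaced or strengthened along these lines before the independence and unit-determinant claims can be justified.
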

This is proven in \cite[Lemma 5]{Smart:1997}.  The proof is constructive. In fact, the matrix $\Lambda$ may be computed in a routine manner from $\bDelta$ and an integral basis for $\mathscr{O}_{M_0}$. However, this step involves a search for a pair of linearly independent $M_0$-linear combinations of $\mathbf{a}_0$ and $\mathbf{a}_1$, which is impossible if $M_0 = \Q$ and $M_1 \neq \Q$; so we reorder the fields in $\mathscr{M}_0$ as necessary to avoid this. Once $\Lambda$ is determined, we have
\[ \beta = \frac{ \Delta_{0,1} }{\det \Lambda}. \]
See the proof in \cite{Smart:1997} for further details.

We now have the following observation. Suppose $F$ has an $S$-proper factorization with companion matrix $\bDelta$. The vectors $\mathbf{a}_i$ appearing in the $S$-proper factorization of $F$ satisfy the relations
\begin{equation}\label{eq:a-vectors}
\mathbf{a}_i = \frac{ \Delta_{1,i} }{ \Delta_{1,0} } \mathbf{a}_0 + \frac{ \Delta_{i,0} }{ \Delta_{1,0} } \mathbf{a}_1, \qquad 0 \leq i < r.
\end{equation}
Thus, $F$ may be reconstructed trivially if both $A$ and $\bDelta$ are known; but, alas, $A$ may not be known! However, we have the relation $A = B \Lambda$, and by Lemma \ref{lemma:UB}, there exists $B' \in \mathscr{U}_\beta$ with $B' = UB$ for some $U \in GL_2(\OS)$. Let $A' = B'\Lambda$, and note that $A' = UA$. For $0 \leq i < r$, set $\mathbf{a}_i' = U\mathbf{a}_i$. The relations \eqref{eq:a-vectors} imply
\[ \mathbf{a}'_i = \frac{ \Delta_{1,i} }{ \Delta_{1,0} } \mathbf{a}'_0 + \frac{ \Delta_{i,0} }{ \Delta_{1,0} } \mathbf{a}'_1, \qquad 0 \leq i < r. \]
Thus, we may recover the individual factors in the binary form $G = \prod \langle \mathbf{a}'_i, \mathbf{X} \rangle$ for any choice of $B' \in \mathscr{U}_\beta$. Necessarily, $G$ is $\OS$-equivalent to the form $F$ whose companion matrix is $\bDelta$.

At the end of this process, we have obtained for each $\mathscr{M}_0$ a finite set $\mathfrak{F}_4(\mathscr{M}_0)$ with the following properties. Every form in $\mathfrak{F}_4(\mathscr{M}_0)$ has field system $\mathscr{M}_0$ and good reduction outside $S$, and every $\OS$-equivalence class of binary forms with field system $\mathscr{M}_0$ and good reduction outside $S$ has at least one representative in the set $\mathfrak{F}_4(\mathscr{M}_0)$. We may screen for redundancies among the representatives using the methods from \cite[\S6]{Smart:1997}.

\begin{remark}
\emph{Improvement in Step 4(c)}. The process of iterating through all matrices $B' \in \mathscr{U}_\beta$ in Step 4(c) has the potential to be a serious bottleneck in the computation. For example, when running this computation for the field system $(K_0, K_3)$, there are $62208$ distinct companion matrices to check, and for each such companion matrix, the set of matrices $\mathscr{U}_\beta$ was often larger than $150000$ (corresponding to $|\beta|_S = 101194$ in the most extreme case). In practice, however, we were able to reduce the running time with the following trick. Recall that the matrices in $\mathscr{U}_\beta$ have diagonal entries $\theta$ and $\phi$ which are divisors of $|\beta|_S$, and the upper right entry $\psi$ is an integer satisfying $0 \leq \psi \leq \phi$. After specifying $\theta$ and $\phi$, we leave $\psi$ as an unknown parameter and continue the computation, obtaining a form $G_\psi$ with coefficients in the ring $\Q[\psi]$. It is then simple to solve for those integral values, if any, of $\psi$ in the range $0 \leq \psi \leq \phi$ which yield binary forms with coefficients in $\OS$.
\end{remark}

\section{Quartic Forms up to $\OS^{0}$-Equivalence}\label{sec:QuarticS0Equiv}

Now that the $\OS$-equivalence classes of quartic forms have been enumerated, the next step is to create a list of quintic forms up to $\OS$-equivalence. From this, we create a complete list of quintic-linear pairs up to $\OS$-equivalence, which yields a complete list of $\OS^{0}$-equivalence classes of quartic forms.
\subsection{Quintic Forms up to $\OS$-Equivalence}\label{quintic forms s equiv}
As before, we let $\mathscr{M}_0$ denote a field system $(M_0, \dots, M_{m-1})$ with $\sum_i [M_i:\Q] = 4$, and let $M$ be the Galois closure of the compositum of the $M_i$. Let $\mathscr{M} = (\Q, M_0, \dots, M_{m-1})$. Here, we construct a set $\mathfrak{F}_5(\mathscr{M})$ which contains at least one representative of each $\OS$-equivalence class of quintic forms with the field system $\mathscr{M}$. This is done inductively from the equivalence classes in $\mathfrak{F}_4(\mathscr{M}_0)$, as in \cite[\S7]{Smart:1997}.  We briefly sketch the procedure here. Let $G(X,Z)$ be a degree 5 binary form with field system $\mathscr{M}$ and $D(G) \in \OS^{\times}$.  Then by Lemma \ref{lemma:SmartLemma2}, $G \sim_{\OS} LH$, where
\[ L(X, Z) = u_0 X + u_1 Z \in \OS[X, Z], \]
and the $\OS$-ideal $(u_0, u_1) = \OS$. Further, $H$ is $\OS$-equivalent to a form in $\mathfrak{F}_4(\mathscr{M}_0)$.   Since $L\sim_{\OS}Z$, without loss of generality we may assume $G\sim_{\OS}Z_UH$, where $U \in GL_2(\OS)$ and $H\in \mathfrak{F}_4(\mathscr{M}_0)$.  For each quartic $H\in\mathfrak{F}_4(\mathscr{M}_0)$, we seek all $Z_UH$ up to $\OS$-equivalence.

Let $\mathfrak{T}=\mathfrak{T}(M)$ be the set of $T$-unit solutions as in \S\ref{sec:QuarticSEquiv}. Since the compositum
of the fields in the field system $\mathscr{M}$ is $M$, the cross ratios $[i,j,k,\ell]$ for $Z_UH$ must lie in $\mathfrak{T}$.  Let
\[ Z_{U} = a_{0,0} X + a_{1,0} Z \text{ and } H = \prod_{i=1}^4(a_{0,i}X + a_{1,i}Z). \]
Since $H \in \mathfrak{F}_4(\mathscr{M}_0)$, the values of $a_{i,j}$ and $\Delta_{j,k}$ are known for $j \neq 0$. For any $Z_U H$, any choice of indices $\{i, j, k \} \subset \{1, 2, 3, 4 \}$ must satisfy
\begin{equation}\label{eq:cross ratio 0}
[0,i,j,k] = \frac{\Delta_{0,i}\Delta_{j,k}}{\Delta_{0,j}\Delta_{i,k}}.
\end{equation}
Note that $\Delta_{0,h} = a_{0,0}a_{1,h} - a_{1,0}a_{0,h}$.  Let $[0,i,j,k] = \tau \in \mathfrak{T}$. By rewriting \eqref{eq:cross ratio 0} we get the identity
\[ (\Delta_{i,k} a_{1,j} \tau - \Delta_{j,k} a_{1,i}) a_{0,0} - (\Delta_{i,k} a_{0,j} - \Delta_{j,k} a_{0,i}) a_{1,0} = 0 \]
in the unknowns $a_{0,0}$ and $a_{1,0}$.  The equation has solution set
\[ a_{0,0} = c(\Delta_{i,k} a_{0,j} - \Delta_{j,k} a_{0,i}), \hspace{.5in} a_{1,0} = c ( \Delta_{i,k} a_{1,j} \tau - \Delta_{j,k} a_{1,i}), \]
where $c$ is a parameter in $M$. Now choose $c$ such that $a_{0,0},a_{1,0} \in \OS$ with $(a_{0,0},a_{1,0}) = \OS$. Let $Z_{[\tau]} = a_{0,0}X + a_{1,0}Z$. Define
\[\mathfrak{F}_5(\mathscr{M}) := \{Z_{[\tau]} H : H \in \mathfrak{F}_4(\mathscr{M}_0), \tau \in \mathfrak{T}, Z_{[\tau]}\nmid H\}. \]
This set can be constructed exhaustively by looping over all possible choices of $H \in \mathfrak{F}_4(\mathscr{M}_0)$ and cross ratios from $\mathfrak{T}$, and attempting to compute $Z_{[\tau]}$, if possible. The resulting set contains at least one representative of every class in the set $\mathfrak{B}_5(\mathscr{M})$.

\subsection{Quartic Forms up to $\OS^{0}$-Equivalence}\label{subsec:QuarticS0Equiv}
For every $G\in\mathfrak{F}_5(\mathscr{M})$ and every $L\in \mathbb{Q}[X,Z]$ with $L \mid G$, consider the quintic-linear pair $(G,L)$.  By Lemma \ref{QL a and b}, we may find an $\mathcal{O}_S$-equivalent pair $(G_U,Z)$. Select one such pair for each possible choice of $G$ and $L$ to create a finite set $\mathfrak{Z}_{5,1}$ of quintic-linear pairs. After screening for equivalence, we may assume $\mathfrak{Z}_{5,1}$ contains a unique representative for each equivalence class in $\mathfrak{QL}$.

Now, as described in \S\ref{sec:PicardCurves}, a list of all Picard curves with good reduction away from $3$ up to $\mathbb{Q}$-isomorphism will consist of the curves
\[\mathcal{C}_{\alpha, F}: Y^3Z = \alpha F(X,Z),\]
where $(FZ,Z)\in\mathfrak{Z}_{5,1}$ and $\alpha\in\{1,3,9\}$.  A complete list of these curves appears in Tables  \ref{tab:QQK1}, \ref{tab:QK3}, and \ref{tab:QK2}.

\section{Solving $S$-unit equations}\label{sec:SolvingSUnit}

As a necessary step in our investigation, we must solve an $S$-unit equation over a finite collection of fields. In this section, let $K$ be a number field, and let $S$ be the set of places of $K$ which lie above the places in $S_\Q = \{3, \infty \}$. Let $\bmu_K$ denote the set of roots of unity in $K$, and let $w = \#\bmu_K$. Let $s$ be the number of finite places in $S$, $r_1$ the number of real embeddings of $K$, and $r_2$ the number of conjugate pairs of complex embeddings of $K$.  Let $r$ be the rank of $\OK^\times$, so $r = r_1 + r_2 - 1$. Let $t$ be the rank of $\OS^\times$, so $t = s + r$.    We fix a generator $\rho_0 \in \bmu_K$ for the roots of unity. The goal of this section is to find all unordered pairs $(\tau_0, \tau_1)$ which solve the following equation (the so-called ``$S$-unit equation''):
\begin{equation}
\label{eq:Sunit}
\tau_0 + \tau_1 = 1, \qquad \tau_i \in \OS^\times.
\end{equation}
In the previous sections, these solutions are referred to as $T$-units, however, the literature on this topic commonly refers to $S$-units so we adjust our notation accordingly for this section.

The general strategy for solving such a problem is to first fix a basis $\{\rho_i\}_{i=1}^t$ for the torsion-free part of the $\Z$-module $\OS^\times$. Then the set $\{\rho_i\}_{i=0}^t$ generates all of $\OS^\times$, and each solution may be described by the exponents $a_{j,i} \in \Z$ such that
\[ \tau_j = \prod_{i=0}^t \rho_i^{a_{j,i}}. \]
Thus, the tuple $(a_{j,0}, a_{j,1}, \dots, a_{j,t}) \in \Z/w\Z \times \Z^t$ uniquely encodes the value $\tau_j$. The number of solutions to \eqref{eq:Sunit} is known to be finite. To construct them explicitly, we proceed in three steps:
\begin{enumerate}
\item Compute an explicit bound $C_0$, via Baker's theory, for which any solution must satisfy
\[|a_{j,i}| \leq C_0. \]
Such a bound is typically much too large to be used in an exhaustive search.
\item Apply a LLL-type lattice argument to improve the bound by several orders of magnitude. Although this step is crucial, it alone is not enough to trivialize the problem; when the bound is $C_1$ and the rank of $\OS^\times$ is $r$, the search space has size roughly $(2C_1)^{2r}$, and this is still too large to carry out a brute-force search in practice.
\item Use the arithmetic of potential solutions (such as symmetry arising from Galois action) and an assortment of sieving methods to search for possible solutions efficiently.
\end{enumerate}
In this section, we discuss the method in detail and explain an improvement in step (2). (This improvement is a consequence of the special circumstances of the current problem and is unlikely to be available in general for solving $S$-unit equations.)

We now assume $K$ is a number field from Table \ref{table:nf_disc_3}. Let $\mathfrak{p}_0$ denote the unique prime of $K$ above $3$. Thus, $s = 1$ and $t = r_1 + r_2$. We have the real infinite places $\mathfrak{p}_1, \dots, \mathfrak{p}_{r_1}$, and the complex infinite places $\mathfrak{p}_{r_1 + 1}, \dots, \mathfrak{p}_t$. We let $\psi_1, \dots, \psi_{r_1}$ denote the $r_1$ distinct real embeddings $K \hookrightarrow \R$, and let
\[ \psi_{r_1 + 1}, \overline{\psi}_{r_1 + 1}, \dots, \psi_t, \overline{\psi}_t \]
denote the $2r_2$ distinct complex embeddings $K \hookrightarrow \C$, with the stipulation that no two distinct embeddings $\psi_{r_1 + k}$ and $\psi_{r_1 + \ell}$ may be conjugate. Once and for all, we relabel the embeddings so that each $\psi_i$ corresponds to the infinite place $\mathfrak{p}_i$. We consider the valuations associated to each 
place:
\[ | \alpha |_{\mathfrak{p}_i} := \begin{cases} 3^{- \ord_3 \alpha} & i = 0 \\
| \psi_i(\alpha) | & 1 \leq i \leq r_1 \\
| \psi_i(\alpha) |^2 & r_1 +1 \leq i \leq t.
\end{cases} \]
Henceforth, we write $\alpha^{(i)}$ as a shorthand for the image of $\alpha$ under the embedding $\psi_i \colon K \hookrightarrow \C$.

\subsection{Linear Forms on Logarithms}
The effective solution of $S$-unit equations rests on the observation from Baker's theory that the nonzero image of a linear form on linearly independent logarithms is bounded away from zero. We give an explicit statement here, to be used in a certain proof later on in this section. Further details may be found in \cite{Baker-Wustholz:1993}.

We let $h_0$ denote the standard logarithmic Weil height on $\P^t(K)$, defined as follows. For any $\mathbf{x} = (x_0 : \cdots : x_t) \in \P^t(K)$,
\[ h_0(\mathbf{x}) = \sum_\mathfrak{p} \log\, (\max_j \,\{ |x_j|_\mathfrak{p}^{n_\mathfrak{p}} \} ), \]
where the sum runs over all places of $K$, and $n_\mathfrak{p}$ denotes the local degree of $\mathfrak{p}$. By the product formula, $h_0(\mathbf{x})$ is independent of the particular choice of coordinates for $\mathbf{x}$. For any $\alpha \in K$, we take $h_0(\alpha) = h_0( (1 : \alpha) )$.

Let $B$ be a number field of degree $n_B$. Fix an embedding $\iota_B \colon B \hookrightarrow \C$. We will abuse notation and write $\log \alpha$ for $\log \iota_B(\alpha)$ for any $\alpha \in B$. We define the \emph{modified height} of $\alpha \in B$ by
\[ h'(\alpha) = \frac{1}{n_B} \max \left( h_0(\alpha), |\log \alpha|, 1 \right). \]
For a linear form in $t+1$ variables,
\[ L(z_0, \dots, z_t) = a_0 z_0 + \cdots + a_t z_t, \qquad a_i \in \Z, \]
we likewise define a modified height of $L$ by
\[ h'(L) := \max \bigl\{ 1, h_0( (a_0:a_1:\cdots:a_t) ) \bigr\}. \]
Since the $a_i \in \Z$, we have $|a_i|_\mathfrak{p} \leq 1$ for any finite place $\mathfrak{p}$. We may always take the coefficients to have no common prime divisor, and so at least one coefficient $a_i$ has $|a_i|_{\mathfrak{p}} = 1$ for each finite place $\mathfrak{p}$. Thus, the finite places make no contribution to $h'(L)$, and we have the following estimate: if $|a_j| \leq  H_1$ for all $j$, then $h'(L) \leq \log H_1$.

The following result gives an explicit bound, which we will use in the search for solutions to $S$-unit equations.
\begin{theorem}[Baker-W\"ustholz, {\cite[pg.~20]{Baker-Wustholz:1993}}]\label{thm:BW}
Let $L$ be a linear form in $t+1$ variables, and let $\rho_0, \dots, \rho_t \in \overline{\Q} - \{0, 1\}$. Let $B$ be the subfield of $\overline{\Q}$ generated by the $\rho_i$. If
\[ \Lambda = L(\log \rho_0, \log \rho_1, \dots, \log \rho_t) \neq 0, \]
then
\[ \log |\Lambda| > -C(t, n_B) h'(L) \prod_{j=0}^t h'(\rho_j), \]
where the constant $C(t, n_B)$ is defined by
\[ C(t, n_B) = 18 (t+2)! (t+1)^{(t+2)} (32 n_B)^{(t+3)} \log \left( 2(t+1) n_B \right). \]
\end{theorem}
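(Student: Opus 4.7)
The plan is to apply Baker's method from transcendence theory, as refined by W\"ustholz using multiplicity estimates on commutative algebraic groups; since Theorem \ref{thm:BW} is quoted as a black box in this paper, my sketch is necessarily only an outline of the standard architecture. I would argue by contradiction: assume $|\Lambda|$ is smaller than allowed by the stated bound, and produce a nonzero algebraic quantity that is simultaneously forced to be extremely small (by the hypothesis on $|\Lambda|$) and bounded below by a Liouville-type arithmetic inequality.

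First I would construct an auxiliary function. Siegel's lemma over the number field $B$ furnishes a nonzero polynomial $\Phi(y_0, \dots, y_t)$ with integer coefficients of controlled height such that the analytic function
\[ F(z) := \Phi\bigl(e^{z \log \rho_0}, \dots, e^{z \log \rho_t}\bigr) \]
vanishes to order $T$ at each integer $z = 1, 2, \dots, N$, for carefully chosen parameters $N, T$. Feasibility of Siegel's lemma requires the polynomial parameter space to exceed the number of vanishing conditions; balancing this constraint against the heights $h'(\rho_j)$ and $h'(L)$ is what fixes the factors $(t+1)^{t+2}$ (counting monomials of appropriate multidegree in $t+1$ variables), $(32 n_B)^{t+3}$ (archimedean and non-archimedean height bookkeeping over $B$), and $\log(2(t+1) n_B)$ (comparison of heights under products) in the final $C(t,n_B)$. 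Extrapolation then exploits the fact that smallness of $\Lambda$ makes $\log \rho_0, \dots, \log \rho_t$ nearly linearly dependent, which by standard complex-analytic interpolation inequalities propagates the vanishing of $F$ to additional integer points beyond the original design.

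The main obstacle, and the crux of any effective proof, is a sharp zero estimate showing that this forced vanishing is incompatible with $\Phi \not\equiv 0$. Baker's original analytic zero lemmas suffice for qualitative bounds but produce noticeably worse and less clean explicit constants; the clean $C(t, n_B)$ quoted above relies on W\"ustholz's algebraic-geometric multiplicity theorem for subvarieties of a suitable commutative group variety (essentially $\mathbb{G}_m^{t+1}$ equipped with the invariant derivations corresponding to the $\log \rho_j$). With that replacement in place, careful bookkeeping through Siegel's lemma, the extrapolation gain, and the multiplicity estimate yields a contradiction precisely when $|\Lambda|$ violates the claimed bound, and tracks each numerical factor of $C(t, n_B)$ explicitly.
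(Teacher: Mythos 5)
The paper offers no proof of this theorem at all: it is imported verbatim from Baker and W\"ustholz's 1993 paper (the citation \cite[pg.~20]{Baker-Wustholz:1993} is the entire justification), and nothing in the present article depends on the internals of the argument. You correctly identify that it is invoked as a black box, and your sketch is a faithful high-level outline of the actual Baker--W\"ustholz architecture found in the cited reference: the contradiction setup, the Siegel-lemma construction of an auxiliary polynomial with controlled height, extrapolation of vanishing to new points via the hypothesized smallness of $\Lambda$, and the decisive role of W\"ustholz's multiplicity estimate on a commutative algebraic group (for linear forms in ordinary logarithms, the ambient group is a product of $\mathbb{G}_a$ with $\mathbb{G}_m^{t+1}$, with the extra additive factor carrying the derivation) in replacing Baker's earlier analytic zero lemmas. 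Your attribution of the individual numerical factors in $C(t,n_B)$ to the monomial count, the height bookkeeping over $B$, and the Siegel-lemma/extrapolation balance is qualitatively the right narrative, though of course the precise constants only emerge from the full bookkeeping in the source. In short: there is nothing in the paper to compare against, and your outline is a reasonable précis of the external proof the paper relies on.
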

Note that we may be sure $\Lambda \neq 0$ if the set $\{ \log \rho_i \}$ is linearly independent over $\Q$.

\subsection{Avoiding the finite place}\label{section:infinite place}

We introduce some definitions. For any $\tau \in \OS^\times$, we may write
\[ \tau = \prod_{j=0}^t \rho_j^{a_j}. \]
The \textit{minimal absolute value} of $\tau$ is
\[ \nu(\tau) := \min_j \{ |\tau|_{\mathfrak{p}_j} \}. \]
The \textit{extremal index} of $\tau$ is the largest index $j$ which achieves the minimal absolute value:
\[ \varepsilon(\tau) := \max \{ j : |\tau|_{\mathfrak{p}_j} = \nu(\tau) \}. \]

\begin{lemma}
Suppose $\tau \in \OS^\times$. Then $\varepsilon(\tau) = \varepsilon(\tau^{-1})$ if and only if $\tau \in \bmu_K$.
\end{lemma}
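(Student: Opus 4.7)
The plan is to deduce from the equality $\varepsilon(\tau) = \varepsilon(\tau^{-1})$ that $|\tau|_{\mathfrak{p}_j}$ is constant as $j$ varies over $0,1,\dots,t$, then apply the product formula to conclude this common value is $1$, and finally invoke Kronecker's theorem to conclude $\tau \in \bmu_K$.

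First I would dispatch the direction $(\Leftarrow)$. If $\tau \in \bmu_K$, then both $\tau$ and $\tau^{-1}$ are roots of unity, so $|\tau|_{\mathfrak{p}_j} = |\tau^{-1}|_{\mathfrak{p}_j} = 1$ at every place of $K$ (archimedean and finite). In particular $\nu(\tau) = 1$ is attained at every index $0 \leq j \leq t$, so $\varepsilon(\tau) = t$; the same holds for $\tau^{-1}$, and the two indices agree.

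For the converse, set $j_0 := \varepsilon(\tau) = \varepsilon(\tau^{-1})$. The key observation is that $|\tau^{-1}|_{\mathfrak{p}_j} = |\tau|_{\mathfrak{p}_j}^{-1}$, so minimizing $|\tau^{-1}|_{\mathfrak{p}_j}$ over $j$ is the same as maximizing $|\tau|_{\mathfrak{p}_j}$ over $j$. Hence $|\tau|_{\mathfrak{p}_{j_0}}$ is simultaneously the minimum and the maximum of the set $\{|\tau|_{\mathfrak{p}_j}\}_{j=0}^{t}$, forcing all $t+1$ of these values to coincide with some common positive value $v$. The product formula, restricted to $S$ (which is legitimate because $\tau \in \OS^\times$ has absolute value $1$ outside $S$), reads $\prod_{j=0}^{t} |\tau|_{\mathfrak{p}_j} = 1$ under the normalization of the paper (complex places enter squared, and at $\mathfrak{p}_0$ one has $|\tau|_{\mathfrak{p}_0} = 3^{-\ord_3 \tau}$, which together recover $|N_{K/\Q}(\tau)|_\infty \cdot |N_{K/\Q}(\tau)|_3 = 1$). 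Since each factor equals $v$, we obtain $v^{t+1} = 1$, and therefore $v = 1$.

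It follows that $\tau$ is a unit at $\mathfrak{p}_0$, and hence at every finite prime of $K$, so $\tau \in \OK$; moreover, every archimedean embedding $\psi_i(\tau)$ satisfies $|\psi_i(\tau)| = 1$. By Kronecker's theorem on algebraic integers all of whose conjugates lie on the unit circle, $\tau$ is a root of unity, and since $\tau \in K$ we conclude $\tau \in \bmu_K$. I do not expect any serious obstacle here; the only step that needs any bookkeeping is aligning the paper's specific normalization of $|\cdot|_{\mathfrak{p}_j}$ with the form of the product formula invoked, and this drops out directly from the definitions.
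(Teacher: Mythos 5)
Your proof is correct and follows essentially the same route as the paper's: show that the equality of extremal indices forces all $|\tau|_{\mathfrak{p}_j}$ to be equal, invoke the product formula over $S$ (using that $\tau\in\OS^\times$ to kill the contribution from places outside $S$) to conclude the common value is $1$, then apply Kronecker's theorem. The one place you are more explicit than the paper is in spelling out why the extremal index for $\tau^{-1}$ picks out the place where $|\tau|$ is maximal, which the paper compresses into ``under any other circumstance, the extremal indices cannot coincide''; this is a welcome clarification, not a divergence.
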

\begin{proof}
If $\tau$ is a root of unity, then every absolute value evaluates to $1$ and the extremal indices of $\tau$ and $\tau^{-1}$ necessarily coincide.

Conversely, if $\varepsilon(\tau) = \varepsilon(\tau^{-1})$, then the minimal absolute value for $\tau$ must be achieved by every $\mathfrak{p}_j$; under any other circumstance, the extremal indices of $\tau$ and $\tau^{-1}$ cannot coincide. Thus, $|\tau|_{\mathfrak{p}_i} = |\tau|_{\mathfrak{p}_j}$ for all primes in $S$. However, as $\tau \in \OS^\times$, $|\tau|_\mathfrak{q} = 1$ for every $\mathfrak{q} \not\in S$. By the product formula, it follows that $|\tau|_{\mathfrak{p}_i} = 1$ for every $\mathfrak{p}_i \in S$ also. This forces $\tau$ to be an algebraic integer, all of whose conjugates have absolute value $1$. So $\tau$ is a root of unity as claimed.
\end{proof}

The \emph{maximum exponent} of $\tau$ is the largest exponent in absolute value that appears when expressing $\tau$ in terms of the torsion-free basis:
\[ H(\tau) := \max_{j \geq 1} \{ |a_j| \}. \]
If $\mathbf{s} = (\tau_0, \tau_1)$ is a solution to the $S$-unit equation, we extend the above definitions as follows. The \emph{maximum exponent} of $\mathbf{s}$ is
\[ H(\mathbf{s}) = \max \{ H(\tau_0), H(\tau_1) \}. \]
We choose $b \in \{0, 1 \}$ in the following way. If $H(\tau_0) \neq H(\tau_1)$, then $b$ is chosen so that $H(\mathbf{s}) = H(\tau_b)$. If $H(\tau_0) = H(\tau_1)$, then $b$ is chosen so that $\varepsilon(\tau_b) \geq \varepsilon(\tau_{1-b})$. The \emph{extremal index} of $\mathbf{s}$ is defined to be $\varepsilon(\mathbf{s}) := \varepsilon(\tau_b)$. Essentially, the extremal index of $\mathbf{s}$ is the same quantity as the index $h$ from \cite[pg.~822]{Smart:1995}. The selection of $b$ is done so as to avoid $\varepsilon(\mathbf{s}) = 0$ if at all possible.

We hope to reduce the bound on $H(\mathbf{s})$ via a lattice reduction argument. However, this approach generally requires two different reduction arguments; a $p$-adic lattice argument for the case that the extremal index of $\mathbf{s}$ corresponds to a finite place of $S$, and a complex lattice argument for the case that the index corresponds to an infinite place. The following technical lemma will allow us to reduce the search to those solutions whose extremal index occurs at an infinite place. Thus, we will only need the complex lattice argument in the sequel.

Let $\mathbf{s} = (\tau_0, \tau_1)$ be a solution to the $S$-unit equation. The \emph{cycle} associated to $\mathbf{s}$ is the set $C(\mathbf{s}) = \{\mathbf{s}, \mathbf{s}', \mathbf{s}'' \}$, where
\[ \mathbf{s}' = (\tau_0^{-1}, -\tau_0^{-1} \tau_1), \qquad \mathbf{s}'' = (-\tau_0 \tau_1^{-1}, \tau_1^{-1}). \]
It is easy to see that $\mathbf{s}'$ and $\mathbf{s}''$ are also solutions to the $S$-unit equation. Moreover, since a solution is an unordered pair, the cycle is unaffected by swapping $\tau_0$ and $\tau_1$.

\begin{lemma}\label{lemma:infinite place}
Let $\mathbf{s}$ be a solution to the $S$-unit equation. Then at least one solution in $C(\mathbf{s})$ has an extremal index which is not equal to zero (and hence corresponds to an infinite place).
\end{lemma}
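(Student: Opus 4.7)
The plan is to argue by contradiction: suppose every solution in $C(\mathbf{s})$ has extremal index $0$, so that in each, the selected component $\tau_b$ attains its minimum absolute value uniquely at the finite place $\mathfrak{p}_0$. I would carry out the argument in three stages.

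First, I would record the key local fact. For any non-torsion $\tau \in \OS^\times$, $|\tau^{-1}|_\mathfrak{p} = |\tau|_\mathfrak{p}^{-1}$ at every place, so the place achieving $\nu(\tau^{-1})$ is precisely the place where $|\tau|_\mathfrak{p}$ is \emph{maximal}. Hence $\varepsilon(\tau) = 0$ forces $|\tau|_{\mathfrak{p}_0}$ to be the unique strict minimum of the $|\tau|_{\mathfrak{p}_i}$, whence $|\tau^{-1}|_{\mathfrak{p}_0}$ is the unique strict maximum of the $|\tau^{-1}|_{\mathfrak{p}_i}$, so $\varepsilon(\tau^{-1}) \geq 1$. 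A root of unity always has $\varepsilon = t \geq 1$, so any candidate $\tau_b$ with $\varepsilon(\tau_b) = 0$ is automatically non-torsion and the inversion remark applies to it.

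Next, I would label the six candidate selected-components of the three cycle members: $u_1 = \tau_0,\ u_2 = \tau_1$ for $\mathbf{s}$; $u_3 = \tau_0^{-1},\ u_4 = -\tau_0^{-1}\tau_1$ for $\mathbf{s}'$; and $u_5 = -\tau_0\tau_1^{-1},\ u_6 = \tau_1^{-1}$ for $\mathbf{s}''$. These organize into three pairs of mutual inverses $(u_1,u_3)$, $(u_2,u_6)$, $(u_4,u_5)$, and by the first stage at most one element of each pair can have extremal index $0$. A short case check using this constraint shows that the only way to pick one zero-extremal component from each of $\mathbf{s}, \mathbf{s}', \mathbf{s}''$ is $(u_1, u_4, u_6)$ or the symmetric $(u_2, u_3, u_5)$, so it suffices to rule out the first, since swapping $\tau_0 \leftrightarrow \tau_1$ handles the other.

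The main obstacle is the final stage: extracting a contradiction from the selection rule in the case $(u_1, u_4, u_6)$. The rule yields three inequalities $H(u_1) \geq H(u_2)$, $H(u_4) \geq H(u_3)$, and $H(u_6) \geq H(u_5)$, each with equality only when an $\varepsilon$-tiebreaker is activated. Since $H$ is invariant under inversion and under multiplication by roots of unity on the exponent vectors, $H(u_3) = H(\tau_0)$, $H(u_5) = H(u_4)$, and $H(u_6) = H(\tau_1)$. These substitutions collapse the three comparisons to the cyclic chain $H(\tau_0) \geq H(\tau_1) \geq H(u_4) \geq H(\tau_0)$, forcing equality throughout. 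The tiebreaker in $\mathbf{s}'$ then demands $\varepsilon(u_4) \geq \varepsilon(u_3)$, but the first stage gives $\varepsilon(u_4) = 0$ while $\varepsilon(u_3) \geq 1$, which is the sought contradiction. I expect the most delicate step to be the bookkeeping that justifies the three identifications $H(u_3) = H(\tau_0)$, $H(u_5) = H(u_4)$, $H(u_6) = H(\tau_1)$ cleanly (since $H$ ignores only the torsion coordinate); once those are in hand, the collapsing chain of inequalities and the tiebreaker contradiction follow in a single stroke.
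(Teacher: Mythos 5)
Your proof is correct. It uses the same ingredients as the paper's proof (the selection rule, the invariance of $H$ under inversion, the cycle, and the fact that a non-torsion $\tau$ with $\varepsilon(\tau)=0$ has $\varepsilon(\tau^{-1})\neq 0$), but it organizes the endgame differently, and the organization you chose is genuinely slicker. The paper's argument, after reducing to one selected component $\tau_b$ of $\mathbf{s}$, splits into two sub-cases on whether $H(\tau_b^{-1})\geq H(-\tau_d\tau_b^{-1})$: in the first sub-case the tiebreaker immediately forces some $\varepsilon(\mathbf{t})>0$; in the second sub-case the paper derives $H(\mathbf{t})=H(\mathbf{t}')>H(\mathbf{s})$ and then, invoking the cycle symmetry $C(\mathbf{t})=C(\mathbf{s})$ and ``repeating the same argument,'' obtains the reverse strict inequality, a descent-style contradiction. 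Your version treats all three cycle members symmetrically from the outset: the three selection inequalities $H(u_1)\geq H(u_2)$, $H(u_4)\geq H(u_3)$, $H(u_6)\geq H(u_5)$ plus the inversion-invariance of $H$ chain into $H(\tau_0)\geq H(\tau_1)\geq H(u_4)\geq H(\tau_0)$, forcing equality everywhere; this shows the paper's second sub-case simply cannot occur, and the contradiction always comes from the tiebreaker $\varepsilon(u_4)\geq\varepsilon(u_3)$ with $\varepsilon(u_4)=0<1\leq\varepsilon(u_3)$. This eliminates the case-split and the ``repeat the argument'' circularity, making the proof shorter and, I think, more transparent. Your stage-two enumeration of the inverse pairs $(u_1,u_3)$, $(u_2,u_6)$, $(u_4,u_5)$ and the reduction to the configurations $(u_1,u_4,u_6)$ or $(u_2,u_3,u_5)$ is correct, as are the $H$-identities in stage three; the one thing to phrase a bit more carefully in a final write-up is the distinction between ``picking'' a zero-extremal component and the component being forced by the selection rule, but your argument handles this correctly in substance.
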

\begin{proof}
Suppose $\mathbf{s} = (\tau_0, \tau_1)$. If both $\tau_0, \tau_1 \in \bmu_K$, then $\varepsilon(\mathbf{s}) = t > 0$. So we may now assume at least one $\tau_i$ is not in $\bmu_K$. Choose $b \in \{0, 1 \}$ as above, and set $d = 1 - b$. We may be sure that $\tau_b$ is not a root of unity, since $H(\tau) = 0$ if and only if $\tau \in \bmu_K$. For the sake of a contradiction, suppose $\varepsilon(\mathbf{s}) = \varepsilon(\mathbf{s}') = \varepsilon(\mathbf{s}'') = 0$, and write $\mathbf{s} = (\tau_b, \tau_d)$.

Since $H(\mathbf{s}) = H(\tau_b)$, we know $\varepsilon(\tau_b) = 0$. But $\tau_b \not\in \bmu_K$, so $\varepsilon(\tau_b^{-1}) \neq 0$. The other solutions in the cycle $C(\mathbf{s})$ are $\mathbf{t} = (\tau_b^{-1}, -\tau_b^{-1} \tau_d)$ and $\mathbf{t}' = (\tau_d^{-1}, -\tau_b \tau_d^{-1})$. If $H(\tau_b^{-1}) \geq H( - \tau_d \tau_b^{-1})$, then $\varepsilon(\mathbf{t}) > 0$. So we may assume $H(\tau_b^{-1}) < H(-\tau_d \tau_b^{-1})$. But from this we have
\[ H(\mathbf{t}) = H(-\tau_d \tau_b^{-1}) > H(\tau_b^{-1}) = H(\tau_b) = H(\mathbf{s}). \]
Moreover, the values in $\mathbf{t}'$ satisfy
\[ H(\tau_d^{-1}) = H(\tau_d) \leq H(\tau_b) = H(\mathbf{s}), \]
and
\[ H( - \tau_b \tau_d^{-1} ) = H( - \tau_d \tau_b^{-1}) = H(\mathbf{t}) > H(\mathbf{s}). \]
Thus, $H(\mathbf{t}') = H(\mathbf{t}) > H(\mathbf{s})$. However, $C(\mathbf{t}) = C(\mathbf{s})$, so by repeating the same argument we may conclude that $H(\mathbf{s}) = H(\mathbf{t}') > H(\mathbf{t})$, a contradiction.
\end{proof}

\begin{remark}
Notice that we are not using the fact that $\mathfrak{p}_0$ is finite. Fix any place $\mathfrak{q} \in S$. The argument can easily be modified (simply by re-indexing the places in $S$) to prove that any cycle has at least one solution whose extremal index does not correspond to $\mathfrak{q}$.
\end{remark}

\begin{remark}
In a later section, we will find a strong bound for $H(\mathbf{s})$ in the special case that $\mathbf{s}$ has an extremal index which is non-zero. Of course, recovering $C(\mathbf{s})$ from $\mathbf{s}$ is trivial, and so this is enough to capture all $S$-unit equation solutions.
\end{remark}

\subsection{A Computable Bound}

The goal of this section is to prove the following proposition:

\begin{proposition}
Suppose $\mathbf{s}$ is a solution to the $S$-unit equation. If $\varepsilon(\mathbf{s}) \neq 0$, then $H(\mathbf{s}) < C_0$, an effectively computable constant which depends only on $K$ and $S$.
\end{proposition}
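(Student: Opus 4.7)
The plan is to apply the Baker--W\"ustholz theorem (Theorem \ref{thm:BW}) to a suitably chosen linear form in complex logarithms. The hypothesis $\varepsilon(\mathbf{s}) \geq 1$ is precisely what makes the extremal place $\mathfrak{p}_e$ infinite, so we can work entirely with complex logarithms and avoid any $\mathfrak{p}$-adic reduction. Writing $\mathbf{s} = (\tau_0, \tau_1)$, choose $b \in \{0,1\}$ so that $H := H(\mathbf{s}) = H(\tau_b)$, and set $d = 1-b$ and $e = \varepsilon(\mathbf{s})$. Expand $\tau_b = \prod_j \rho_j^{a_j}$ and $\tau_d = \prod_j \rho_j^{c_j}$ with $|a_j|, |c_j| \leq H$ for $j \geq 1$.

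The first substantive step is to show that $\psi_e(\tau_b)$ is exponentially small in $H$. Because $\{\rho_1,\dots,\rho_t\}$ is a $\Z$-basis for the torsion-free part of $\OS^\times$, the $S$-regulator matrix $R = (\log|\rho_j|_{\mathfrak{p}_i})_{1 \leq i,j \leq t}$ is invertible, so inverting it yields an effective $\kappa_0 > 0$ (depending only on $K$, $S$, and $\brho$) with
\[ \max_{1 \leq i \leq t} \bigl| \log|\tau_b|_{\mathfrak{p}_i} \bigr| \;\geq\; \kappa_0 H. \]
Combining this with the product formula $\sum_i n_{\mathfrak{p}_i} \log|\tau_b|_{\mathfrak{p}_i} = 0$ and the fact that $\mathfrak{p}_e$ realizes the \emph{minimum} absolute value of $\tau_b$, we obtain effective constants $\kappa_1 > 0$ and $\kappa_1'$ with $\log |\psi_e(\tau_b)| \leq -\kappa_1 H + \kappa_1'$.

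Next I would build a small linear form in complex logarithms and bound it from above. Since $\tau_b + \tau_d = 1$, $\psi_e(\tau_d) = 1 - \psi_e(\tau_b)$ is exponentially close to $1$, and the principal branch satisfies $|\log \psi_e(\tau_d)| \leq 2\exp(-\kappa_1 H + \kappa_1')$ for $H$ sufficiently large. Because $\rho_0$ is a root of unity, $\log \psi_e(\rho_0) \in (2\pi i / w)\Z$; absorbing the $c_0$-contribution and the branch ambiguity into an integer multiple of $2\pi i$ yields
\[ \Lambda \;:=\; \log\psi_e(\tau_d) \;=\; \sum_{j=1}^t c_j \log\psi_e(\rho_j) + 2\pi i\, m \]
for some $m \in \Z$ with $|m| \leq \kappa_2 H$ (by the triangle inequality). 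Crucially, $\Lambda \neq 0$: otherwise $\tau_d = 1$, forcing $\tau_b = 0 \notin \OS^\times$.

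Finally, I would apply Theorem \ref{thm:BW} to $\Lambda$, regarded as a linear form of modified height $O(\log H)$ in the $t+1$ fixed logarithms $\log\psi_e(\rho_1),\dots,\log\psi_e(\rho_t),\log(-1)$ (whose modified heights are constants attached to $K$ and $\brho$). This produces an effective $\kappa_3$ so that $\log|\Lambda| \geq -\kappa_3 \log H$. Matching this lower bound against the upper bound $\log|\Lambda| \leq -\kappa_1 H + O(1)$ gives $\kappa_1 H \leq \kappa_3 \log H + O(1)$, which forces $H \leq C_0$ for an effectively computable $C_0$. The main obstacle is the first substantive step: one must carefully extract from the invertibility of the $S$-regulator together with the product formula a cleanly linear lower bound on $-\log|\psi_e(\tau_b)|$ in terms of $H$, and one must track that every constant introduced along the way is genuinely effective in $K$, $S$, and $\brho$ rather than merely existing. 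Once this is in place, the comparison with $\tau_d$, the bound on $m$, and the invocation of Baker--W\"ustholz are all essentially routine.
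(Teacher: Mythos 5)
Your overall strategy matches the paper's: show $|\tau_b|_{\mathfrak{p}_e}$ is exponentially small in $H$ via the $S$-regulator and product formula (this re-proves the inequality $|\tau_0|_{\mathfrak{p}_h} \leq e^{-c_3 H}$ that the paper simply cites as Lemma 2 of Smart \cite{Smart:1995}), deduce $|\log\psi_e(\tau_d)|$ is exponentially small, express this as a linear form in fixed logarithms, apply Baker--W\"ustholz, and compare bounds. Your direct argument that $\Lambda \neq 0$ (else $\tau_d = 1$, so $\tau_b = 0$) is cleaner than appealing to $\Q$-linear independence of the logarithms.

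However, there is a genuine gap in the handling of the root-of-unity contribution. You write $\tau_d = \rho_0^{c_0}\prod_{j\ge 1}\rho_j^{c_j}$, and then claim that $c_0\log\psi_e(\rho_0)$ together with the branch adjustment can be absorbed ``into an integer multiple of $2\pi i$,'' ultimately casting the extra logarithm as $\log(-1)$. But $\log\psi_e(\rho_0) = 2\pi i k/w$ for some $k$ coprime to $w$, so $c_0\log\psi_e(\rho_0) + 2\pi i A = 2\pi i(c_0k/w + A)$, which is an integer multiple of $2\pi i$ (equivalently an integer multiple of $\log(-1)$, with even coefficient) only when $w \mid c_0 k$. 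Among the fields actually appearing here, $K_1$ and $L_3$ have $w = 6$, so the claim fails. The remedy is to take the $(t{+}1)$-st logarithm to be $\log\zeta$ where $\zeta = e^{2\pi i/w}$; then $c_0\log\psi_e(\rho_0) + 2\pi i A = (c_0 k + wA)\log\zeta$ has honest integer coefficient $c_0 k + wA$, bounded by $O(H)$. This is precisely the step the paper carries out (together with a small perturbation of the branch cut to keep all principal values consistent), leading to the coefficient $a_{0,1}' = \ell + wA_1$ with $|a_{0,1}'| \le \tfrac{w}{2}(t+2)H$. With that substitution your argument goes through; the Baker--W\"ustholz input, the Peth\H{o}--de Weger style conclusion, and the final maximization over the possible extremal indices $h$ are all as in the paper.
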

The proof is similar to that of \cite[Lemma 4]{Smart:1995}, although the determination of the constant $C_0$ (labeled $K_1$ in Smart's presentation) will differ slightly.
\begin{proof}
Possibly after relabeling, we may assume $\mathbf{s} = (\tau_0, \tau_1)$, $b = 0$, and
\[ \varepsilon(\mathbf{s}) = \varepsilon(\tau_0) = h > 0, \]
for some $1 \leq h \leq r_1 + r_2$. For $j \in \{0, 1 \}$, write
\[ \tau_j' = \prod_{i=1}^t \rho_i^{a_{j,i}}, \]
so that $\tau_j = \rho_0^{a_{j,0}} \tau_j'$. Consider the set $S_\infty = \{\mathfrak{p}_1, \dots, \mathfrak{p}_{r_1 + r_2} \}$ of infinite places of $S$, the first $r_1$ being the real places. As in \cite{Smart:1995}, we define the following constants, depending on the value $1 \leq h \leq r_1 + r_2$:
\[
\begin{split}
c_{11}(h) & := \begin{cases} \displaystyle \frac{ \log(4) }{c_3} & \mbox{if
    $\mathfrak{p}_h$ is real} \\ \displaystyle \frac{ 2\log(4) }{c_3} &
  \mbox{if $\mathfrak{p}_h$ is complex,} \end{cases} \\
c_{12}(h) & := 2, \\
c_{13}(h) & := \begin{cases} c_3 & \mbox{if $\mathfrak{p}_h$ is real} \\
  c_3/2 & \mbox{if $\mathfrak{p}_h$ is complex.} \end{cases}
\end{split}
\]
The constant $c_3$ is defined in \cite[pg.~823]{Smart:1995}, and is derived from the regulator for the $S$-units of $K$. For this project we always have $0.1 < c_3 < 0.3$. From \cite[Lemma 2]{Smart:1995}, we know that $|\tau_0|_{\mathfrak{p}_h} \leq e^{-c_3 H(\mathbf{s})}$. If we suppose $H(\mathbf{s}) \geq c_{11}(h)$, then consequently
\[ |\tau_1 - 1|_{\mathfrak{p}_h} =|\tau_0|_{\mathfrak{p}_h} \leq \frac{1}{4}. \]
For any $z \in \C$ satisfying $|1-z| \leq \frac{1}{4}$, we have the estimate
\[ |\log z| \leq 2|1-z|, \]
and so
\begin{equation}
\label{eq:log_tau_1_UB}
|\log \tau_1^{(h)}| \leq 2 | \tau_1 - 1|_{\mathfrak{p}_h} = 2|\tau_0|_{\mathfrak{p}_h} \leq 2e^{-c_3 H(\mathbf{s})}.
\end{equation}
On the other hand, let $\log$ denote the principal branch of the logarithm. We have
\begin{equation}
\label{eq:log_tau_1}
\left| \log \tau_1^{(h)} \right| = \left| \log \left( (\rho_0^{a_{0,1}})^{(h)} \right) + \sum_{j=1}^t a_{j,1} \log \rho_j^{(h)} + A_1 2\pi \sqrt{-1} \right|,
\end{equation}
where the term $A_1 2\pi \sqrt{-1}$ arises as a consequence of demanding principal values for the logarithms. In order to take advantage of Baker's theory of linear forms of logarithms, we need the various expressions to be independent over $\Q$, but this is not true for $\log \rho_0^{(h)}$ and $2\pi\sqrt{-1}$. We must combine these terms, while still maintaining a bound on the coefficients. We explain this step now.

Suppose $A = \prod_{i=1}^N \alpha_i$, and we take principal arguments in the range $[-\pi, \pi)$. Then $\sum \arg \alpha_i \in [-N\pi, N\pi)$, and
\[ \arg A = \sum \arg \alpha_i + m \cdot 2 \pi, \]
where $|m| \leq N/2$. We have
\[ \log A = \sum \log \alpha_i + m \cdot 2\pi \sqrt{-1}, \]
again with $|m| \leq N/2$. Thus, in \eqref{eq:log_tau_1}, we have the bound
\[ |A_1| \leq \frac{1}{2} (1 + \sum_{j=1}^t |a_{j,1}|) \leq \frac{1}{2} (1 + t H(\mathbf{s})). \]
Inside $\C$, let $\zeta := \exp(2\pi\sqrt{-1}/w)$, so that $\zeta = \rho_0^{(h)}$. There is an integer $\ell$ with $-w/2 < \ell \leq w/2$ for which
\[ (\rho_0^{a_{0,1}})^{(h)} = \zeta^\ell. \]
We adjust our branch of $\log$ slightly. We choose a branch whose argument always lies in the range $(-\pi + \varepsilon, \pi + \varepsilon)$ for some sufficiently small $\varepsilon > 0$, such that each of the expressions $\log \rho_j$, for $j > 0$, and $\log \zeta^\ell$ agrees with its principal value. Now $-1 = \zeta^{w/2}$ and so
\[
\log \left( (\rho_0^{a_{0,1}})^{(h)} \right) + A_1 2\pi\sqrt{-1} = \log \zeta^\ell + 2A_1 \log \zeta^{w/2} = (\ell + w A_1) \log \zeta. \]
Set $a_{0,1}' = \ell + w A_1$. We have the bound $|a_{0,1}'| \leq \frac{w}{2}(t+2)H(\mathbf{s})$, and we may rewrite \eqref{eq:log_tau_1} as
\[ \left| \log \tau_1^{(h)} \right| = \left| a_{0,1}' \log \zeta + \sum_{j=1}^t a_{j,1} \log \rho_j^{(h)} \right|, \]
where the coefficients $a_{0,1}'$ and $a_{j,1}$ have been given explicit bounds.

Let $L$ denote the linear form in $t+1$ variables
\[ L(z_0, z_1, \dots, z_t) := a_{0,1}' z_0 + \sum_{j=1}^t a_{j,1} z_i. \]
Then we have
\[ \log \tau_1^{(h)} = L \left( \log \zeta, \log \rho_1^{(h)}, \dots, \log \rho_t^{(h)} \right), \]
and the coefficients of $L$ are all bounded by $\frac{w}{2}(t+2)H(\mathbf{s})$. From this observation, we see
\[ h'(L) \leq \log \left( \frac{w}{2}(t+2) H(\mathbf{s}) \right). \]
Now let us define
\[ c_{14}'(h) := C(t + 1, n_K) \cdot \frac{1}{n_K} \cdot h'(\zeta) \prod_{j=1}^t h'(\rho_j^{(h)}). \]
By Theorem \ref{thm:BW}, we have
\[ | \log \tau_1^{(h)} | > \exp \left( -c_{14}'(h) \log \left( \frac{w}{2}(t+2)H(\mathbf{s}) \right) \right). \]
Combining with the inequality \eqref{eq:log_tau_1_UB}, we obtain:
\[ 2e^{-c_3 H(\mathbf{s})} > \exp \left( -c_{14}'(h) \log \left( \frac{w}{2}(t+2)H(\mathbf{s}) \right) \right). \]
Rewriting this inequality as one comparing $H(\mathbf{s})$ and $\log H(\mathbf{s})$, and then applying a lemma of Peth\"o and de~Weger \cite[Lemma 2.2]{Petho-deWeger:1986}, we obtain the bound
\[ H(\mathbf{s}) \leq c_{15}'(h) := \frac{2}{c_{13}(h)} \left( \log c_{12}(h) + c_{14}'(h) \log \left( \frac{w(t+2)c_{14}'(h)}{2c_{13}(h)} \right) \right). \]
This almost gives us the desired bound; however, we still do not know \emph{which} index is given by $h$. Finally, we take
\[ C_{11} := \max_{1 \leq j \leq t} \, \{ c_{11}(j) \}, \qquad C_{15} := \max_{1 \leq j \leq t} \, \{ c_{15}'(j) \}, \qquad C_0 := \max\,\{C_{11}, C_{15}\}, \]
and we may be sure $H(\mathbf{s}) \leq C_0$, as desired.
\end{proof}

The computation of these bounds is routine. The computation of the constant $c_3$ is explained in \cite[pg.~823]{Smart:1995}. Table \ref{table:C0} gives a (mild over-estimate for a) value of $C_0$ for each field under consideration.

\begin{table}[!ht]
{\renewcommand{\arraystretch}{1.2}
\caption{Bounds $C_0$, $C_0'$ for $H(\mathbf{s})$ by field.}
\label{table:C0}
\begin{tabular}{crr}
\toprule
Field & $C_0 \qquad \quad {}$ & $C_0'$ \\
\midrule
$K_0$ & $4.916825 \times 10^{9\phantom{0}} $ & $3$ \\
$K_1$ & $8.018712 \times 10^{9\phantom{0}} $ & $5$ \\
$K_2$ & $2.067269 \times 10^{19}$ & $217$ \\
$K_3$ & $1.957261 \times 10^{15}$ & $49$ \\
$L_3$ & $2.137374 \times 10^{19}$ & $243$ \\
\bottomrule
\end{tabular}
}
\end{table}

\subsection{Improvement of $C_0$ by lattice reduction}

As in \cite[\S4, pg.~828-830]{Smart:1995}, we use the LLL algorithm to provide a substantial reduction in the bound $C_0$. For each embedding $\psi_i \colon K \hookrightarrow \C$, we explain how to reduce the bound $C_0$ under the assumption that the extremal index of some solution $\mathbf{s}$ is $\varepsilon(\mathbf{s}) = h \neq 0$. The maximum of these reduced bounds yields a replacement $C_0'$ for the constant $C_0$.

\begin{remark}
The following calculation is sensitive to the choice of ordered $\Z$-basis $\rho_1, \dots, \rho_t$ for the free part of $\OS^\times$. We require that the $\rho_i$ are always chosen so that in every embedding $\psi_h$, at least one value $\rho_j^{(h)}$ for $1 \leq j \leq t$ is not a positive real number. This is not a serious restriction, but it does allow us to always use the LLL algorithm as described by Smart in \cite[\S4]{Smart:1995}.
\end{remark}
Fix a particular embedding $\psi_h \colon K \hookrightarrow \C$ for some $h \neq 0$, and consider
\begin{equation}
\Lambda := \sum_{j=0}^t a_j \kappa_j,
\end{equation}
where $a_0 = a_{0,1}'$, $a_j = a_{0,j}$ for $j > 0$, $\kappa_j = \log \rho_j^{(h)}$ for $j > 0$. For $\kappa_0$ we have
\[ \kappa_0 = \log \zeta = \frac{2 \pi \sqrt{-1}}{w}. \]
We relabel the $\kappa_j$ as necessary to ensure that $\Re\,\kappa_t \neq 0$.

We may assume $|a_j| \leq C_0$ for $j > 0$; we have $|a_0| \leq \frac{w}{2}(t+2)C_0$. For any $\gamma \in \R$, let $[\gamma]$ denote the integer closest to $\gamma$. For any $z \in \C$, $\Re z$ and $\Im z$ denote the real and imaginary parts of $z$, respectively. Let $C \gg 0$ be a large constant, whose value we will make precise in a moment. We define
\[ \begin{split}
\Phi_0 & := \sum_{j=1}^t a_j [C\, \Re \kappa_j], \\
\Phi_1 & := \sum_{j=1}^t a_j [C\, \Im \kappa_j] + a_0[C \cdot \tfrac{2\pi}{w}].
\end{split} \]
Let $\mathscr{B}$ be the following integral matrix:
\[ \mathscr{B} := \left( \begin{array}{cccccc}
1 & 0 & \cdots & 0 & 0 & 0 \\
0 & 1 & \cdots & 0 & 0 & 0 \\
\vdots & \vdots & & \vdots & \vdots & \vdots \\
0 & 0 & \cdots & 1 & 0 & 0 \\

[C\, \Re \kappa_1] & [C\, \Re \kappa_2] & \cdots & [C\, \Re \kappa_{t-1}] & [C\, \Re \kappa_t] & 0 \\

[C\, \Im \kappa_1] & [C\, \Im \kappa_2] & \cdots & [C\, \Im \kappa_{t-1}] & [C\, \Im \kappa_t] & [C \cdot \tfrac{2\pi}{w}]
\end{array} \right). \]
Let $\mathscr{L}$ be the lattice generated by the columns of $\mathscr{B}$.

From \cite[Prop.~1.11]{LLL:1982} (or \cite[Lemma 3.4]{deWeger:1989}, where the proof is given in slightly more detail), the Euclidean length of any nonzero lattice element in $\mathscr{L}$ is bounded below by $C_1 := 2^{-t/2}\| \mathbf{b}_0 \|$, where $\mathbf{b}_0$ is the shortest vector in the LLL-reduced basis for $\mathscr{L}$. We define
\[ \begin{split}
S_\mathscr{L} & := \left( C_1^2 - (t-1)C_0^2 \right)^{1/2}, \\
T_\mathscr{L} & := \frac{1}{2}\left( w + 2 + \sqrt{2} \right) tC_0.
\end{split} \]
Notice that $C_1$ (and so $S_\mathscr{L}$) depends on the choice of $C$, but $C_0$ and $T_\mathscr{L}$ do not. As a practical matter, we choose $C \approx C_0^{(t+1)/2}$, compute the reduced basis for $\mathscr{L}$, and check whether
\begin{equation}\label{eq:C1-condition}
C_1^2 > T_\mathscr{L}^2 + (t-1)C_0^2,
\end{equation}
as this implies $S_\mathscr{L} - T_\mathscr{L} > 0$. If not, we replace $C$ by $2C$ and try again. Although there is no proof that this procedure will terminate, we did find a constant $C$ for each   field which forced $S_\mathscr{L} - T_\mathscr{L} > 0$.  This is beneficial in light of the following result:
\begin{lemma}
Suppose there exists $C > 0$ such that \eqref{eq:C1-condition} holds. Then every solution $\mathbf{s}$ to the $S$-unit equation with $\varepsilon(\mathbf{s}) = h \neq 0$ satisfies
\[ H(\mathbf{s}) \leq C_0' := \max \left\{ C_{11}, \left\lfloor \frac{1}{c_{13}(h)} \bigl( \log (C c_{12}(h)) - \log (S_\mathscr{L} - T_\mathscr{L}) \bigr) \right\rfloor \right\}. \]
\end{lemma}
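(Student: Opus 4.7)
The plan is to combine the exponential upper bound on $|\Lambda| = |\log \tau_1^{(h)}|$ coming from the previous proof with a geometric lower bound arising from LLL reduction of the lattice $\mathscr{L}$. One may assume at the outset that $H(\mathbf{s}) > C_{11}$, since otherwise $H(\mathbf{s}) \leq C_{11} \leq C_0'$ is trivial. Under this hypothesis, the same derivation as in the proof of the previous proposition produces
\[ |\Lambda| \leq c_{12}(h) \exp(-c_{13}(h) H(\mathbf{s})), \]
where $\Lambda = a_0 \kappa_0 + \sum_{j=1}^t a_j \kappa_j$ has integer coefficients satisfying $|a_j| \leq C_0$ for $1 \leq j \leq t$ and $|a_0| \leq \tfrac{w}{2}(t+2) C_0$.

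Next, I would associate to $\mathbf{s}$ the integer vector $\mathbf{x} = (a_1, a_2, \dots, a_t, a_0)^T$ and the lattice point $\mathbf{y} = \mathscr{B}\mathbf{x} \in \mathscr{L}$. From the explicit form of $\mathscr{B}$, the first $t-1$ coordinates of $\mathbf{y}$ equal $a_1, \dots, a_{t-1}$, while the final two equal $\Phi_0$ and $\Phi_1$. A short check confirms $\mathbf{y} \neq 0$: were it zero, then $a_1 = \cdots = a_{t-1} = 0$, and the condition $\Re\,\kappa_t \neq 0$ (guaranteed by the ordering of the free basis $\rho_1, \dots, \rho_t$ fixed in the preceding remark) together with $C$ sufficiently large would then force $a_t = a_0 = 0$; this would yield $\tau_1 = 1$ and hence $\tau_0 = 0 \notin \OS^\times$, a contradiction.

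With $\mathbf{y} \neq 0$, \cite[Prop.~1.11]{LLL:1982} gives $\|\mathbf{y}\| \geq C_1$. Since the top $t-1$ coordinates of $\mathbf{y}$ are each bounded by $C_0$ in absolute value, this forces
\[ \Phi_0^2 + \Phi_1^2 \geq C_1^2 - (t-1) C_0^2 = S_\mathscr{L}^2. \]
A second, essentially routine computation replaces each integer-rounded entry $[\,\cdot\,]$ appearing in $\mathscr{B}$ by its unrounded value and applies the coefficient bounds on the $a_j$ to obtain the estimate $|(\Phi_0 + \Phi_1 \sqrt{-1}) - C\Lambda| \leq T_\mathscr{L}$. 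The triangle inequality in $\C$ now yields
\[ S_\mathscr{L} \leq C|\Lambda| + T_\mathscr{L}, \]
so $|\Lambda| \geq (S_\mathscr{L} - T_\mathscr{L})/C$, a positive quantity by the standing hypothesis \eqref{eq:C1-condition}. Matching this with the exponential upper bound on $|\Lambda|$ and taking logarithms yields the explicit bound in the statement; combining with the initial case $H(\mathbf{s}) \leq C_{11}$ gives $H(\mathbf{s}) \leq C_0'$.

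The two steps that will require the most care are the verification $\mathbf{y} \neq 0$ (which uses essentially the choice of ordered basis $\rho_1, \dots, \rho_t$ fixed at the start of the subsection) and the rounding-error bookkeeping that produces the constant $T_\mathscr{L}$ in precisely the stated form. Neither is deep, but both are the points at which a careless argument would give the wrong constant; the LLL bound and the Baker-type estimate combine in a by-now-standard fashion once they are in place.
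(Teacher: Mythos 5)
Your argument correctly reconstructs the Smart-style LLL reduction the paper invokes by reference to \cite[Lemma~6]{Smart:1995}: combine the Baker-type exponential upper bound on $|\Lambda|$ with the shortest-vector lower bound on $\mathbf{y} = \mathscr{B}\mathbf{x}$, quantify the rounding error as $T_\mathscr{L}$, and solve for $H(\mathbf{s})$. The only (cosmetic) slip is in attributing the hypothesis $\Re\kappa_t \neq 0$ to the preceding remark; in the paper that condition is secured by the explicit relabeling of the $\kappa_j$ announced just after the formula for $\Lambda$, whereas the remark's requirement guarantees instead that some $\Im\kappa_j$ is nonzero.
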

\begin{proof}
The proof is similar to the proof of Lemma 6 in \cite{Smart:1995}.
\end{proof}

\subsection{Sieving for Solutions}

Since recovering every solution in a cycle is trivial if one solution is known, we are now left with the following problem: Given a bound $C_0'$, determine all solutions $\mathbf{s} = (\tau_0, \tau_1)$ to the $S$-unit equation, where
\[ \tau_i = \rho_0^{a_{0,i}} \prod_{j=1}^t \rho_j^{ a_{j,i} }, \qquad 0 \leq a_{0,i} < w, \quad |a_{j,i}| \leq C_0'. \]
In the worst case scenario, when $M = L_3$, the rank of $\mathscr{O}_{M,S}^\times$ is $3$, and so the number of possible solutions to check is roughly
\[ (2C_0' + 1)^3\cdot w \approx 4.62 \times 10^8. \]
This is still not manageable for a brute force search. So a sieving method is used to accelerate the exhaustive search for solutions. We describe briefly the approach, which is similar to some of the techniques described by Smart in \cite[\S8]{Smart:1997}. The entire computation was carried out in Sage \cite{Sage} in a matter of several minutes using a project on SageMathCloud.

Let $\tau = \brho^\mathbf{a}$ with
\[ \mathbf{a} = (a_0, \dots, a_t), \qquad 0 \leq a_0 < w, \quad |a_j| \leq C_0' \text{ for } j > 0. \]
Let $q \in \Z$ be a prime which is completely split in $\mathscr{O}_M$, say
\[ q\mathscr{O}_M = \mathfrak{q}_0 \cdots \mathfrak{q}_{n-1}, \quad n = [M:\Q]. \]
For each $0 \leq i < n$, we let $\rho_{(i)}$ denote the tuple
\[ \rho_{(i)} = ( \overline{\rho}_0, \overline{\rho}_1, \dots, \overline{\rho}_t ) \in \bigl( \mathscr{O}_M / \mathfrak{q}_i \bigr)^{t+1} \cong \F_q^{t+1}, \]
where the overline denotes reduction modulo $\mathfrak{q}_i$. If $\tau_i = \brho^{\mathbf{a}_i}$ satisfy $\tau_0 + \tau_1 = 1$, we obtain $r$ equations of the form
\[ \overline{\tau}_0 + \overline{\tau}_1 = 1 \in \F_{\mathfrak{q}_i}. \]
Now, suppose that the exponent vector $\mathbf{a}_0$ for $\tau_0$ has been \emph{fixed} modulo $q-1$. Each of the $r$ equations places a possibly \emph{different} set of conditions on $\mathbf{a}_1$ modulo $q-1$. We write a procedure to catalog the approximately $q^t$ results of the form: ``if $\mathbf{a}_0$ modulo $(q-1)$ is given by the vector $\mathbf{b}_0 \in \F_q^{t+1}$, then $\mathbf{a}_1$ modulo $(q-1)$ must come from  an explicit finite set.'' Moreover, the given finite set is often empty for many choices of $\mathbf{b}_0$. So in practice, we obtain congruence conditions modulo $(q-1)$ on the exponent vectors of $\tau_0, \tau_1$.

We now select a finite list of rational primes, $q_0, q_1, \dots, q_N$, each of which splits completely in $M$, and such that
\[ \lcm \bigl( q_0 - 1, q_1 - 1, \dots, q_N - 1 \bigr) > 2C_0'. \]
Then (essentially by a Chinese remainder theorem type argument), we restrict the possible values for $\mathbf{a}_0$ to a set of searchable size, and for each such $\mathbf{a}_0$, check whether the collected congruence conditions on $\mathbf{a}_1$ actually yield a solution to the $S$-unit equation. With this approach, we were able to produce all solutions to the $S$-unit equation for any field $M$ which is the Galois closure of the compositum of the fields in a field system attached to a quartic binary form with good reduction outside $S_\Q = \{3, \infty\}$.

For reference, we record the number of $S$-unit equation solutions we found for each of the fields $K_i, L_3$ in Table \ref{table:num_SU_sols}.
\begin{table}[!t]
{\renewcommand{\arraystretch}{1.2}
\caption{Number of $S$-unit equation solutions $\mathbf{s}$ by field.}
\label{table:num_SU_sols}
\begin{tabular}{cc}
\hline
Field & Solutions \\
\hline
$K_0$ & \phantom{0}0 \\
$K_1$ & \phantom{0}4 \\
$K_2$ & 72 \\
$K_3$ & \phantom{0}0 \\
$L_3$ & 25 \\
\hline
\end{tabular}
}
\end{table}
The fact that there are no solutions $\mathbf{s}$ in the field $K_0 = \Q$ has an important consequence.
\begin{lemma}
Let $F(X,Z) \in \Q[X,Z]$ be a binary quartic form with good reduction away from $3$. The the field system of $F(X,Z)$ cannot be $(K_0, K_0, K_0, K_0)$ or $(K_1, K_1)$.
\end{lemma}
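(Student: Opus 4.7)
The plan is to reduce both cases to the statement that the $S$-unit equation over $\Q = K_0$ has no solutions, using the cross-ratio relation \eqref{eq:cross ratio}. Suppose for contradiction that $F$ has field system $\mathscr{M}_0 = (K_0,K_0,K_0,K_0)$ or $\mathscr{M}_0 = (K_1,K_1)$. After replacing $F$ by an $\OS$-equivalent form, we may assume $F$ admits an $S$-proper factorization $F = \prod_{i=1}^4 \langle \mathbf{a}_i, \mathbf{X}\rangle$ (Lemma 1 of \cite{Smart:1997}); in particular, all $\Delta_{i,j}$ with $i\neq j$ are $T$-units in $\OT^\times$, where $M$ is the Galois closure of the compositum of the $M_i$.

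Set $\tau_0 = [1,2,3,4]$ and $\tau_1 = [3,2,1,4]$. By \eqref{eq:cross ratio}, $\tau_0 + \tau_1 = 1$, and each $\tau_j \in \OT^\times$. I claim both $\tau_j$ lie in $\OSQ^\times = \Z[\tfrac{1}{3}]^\times$. In the first case this is immediate because $M = K_0 = \Q$ already. In the second case, $M = K_1$ and the nontrivial element $\sigma \in \Gal(K_1/\Q)$ acts on the indices by the permutation $(1\,2)(3\,4)$ (because $\sigma$ swaps the two roots of each of the two irreducible quadratic factors of $F$). Using $\Delta_{i,j}^\sigma = \Delta_{\sigma(i),\sigma(j)}$ and $\Delta_{j,i} = -\Delta_{i,j}$, a direct computation gives
\[
\tau_0^\sigma \;=\; \frac{\Delta_{2,1}\Delta_{4,3}}{\Delta_{2,4}\Delta_{1,3}} \;=\; \frac{(-\Delta_{1,2})(-\Delta_{3,4})}{\Delta_{1,3}\Delta_{2,4}} \;=\; \tau_0,
\]
and similarly $\tau_1^\sigma = \tau_1$. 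Hence $\tau_0, \tau_1 \in \Q$. Being simultaneously $T$-units in $\OT$ and elements of $\Q$, they have trivial valuation at every rational prime $p \neq 3$, so $\tau_0, \tau_1 \in \Z[\tfrac{1}{3}]^\times$.

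Thus $(\tau_0, \tau_1)$ is a solution to the $S$-unit equation $\tau_0 + \tau_1 = 1$ with $\tau_j \in \mathscr{O}_{\SQ}^\times$, i.e.\ a solution over $K_0$. But Table~\ref{table:num_SU_sols} records that there are zero such solutions, which is the desired contradiction. The only delicate step is tracking the Galois action on the indexing in the $(K_1, K_1)$ case, which is managed by the convention of Lemma~\ref{lemma:indexing}; everything else is bookkeeping.
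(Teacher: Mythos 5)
Your proof is correct and follows essentially the same line as the paper: assume an $S$-proper factorization, observe that the cross ratios satisfy the $T$-unit equation, note that in both field systems the relevant cross ratio is Galois-invariant (trivially for $(K_0,K_0,K_0,K_0)$, and via the permutation $(1\,2)(3\,4)$ for $(K_1,K_1)$) and hence lies in $\Z[\tfrac13]^\times$, then cite the empty solution set over $K_0$ for the contradiction. The only cosmetic differences are your $1$-based indexing and your explicit spelling-out of the descent from $\OT^\times\cap\Q$ to $\Z[\tfrac13]^\times$, which the paper leaves implicit.
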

\begin{proof}
Without loss of generality, we may assume an $S$-proper factorization for $F$. Now, the principal observation is that the equation \eqref{delta equation} must hold. If the field system is $(K_0, K_0, K_0, K_0)$, then the cross ratios in the equation represent $S$-unit equation solutions lying in $K_0 = \Q$, and no such solutions exist. On the other hand, if the field system is $(K_1, K_1)$, then let $\sigma$ denote the nontrivial element of $\mathrm{Gal}(K_1/\Q)$. Then the cross ratio
\[ [0, 1, 2, 3] = \frac{ \Delta_{0,1} \Delta_{2,3} }{ \Delta_{0,2} \Delta_{1,3} } \]
satisfies
\[ [0, 1, 2, 3]^\sigma = [1, 0, 3, 2] = [0, 1, 2, 3], \]
so again this provides a contradiction: namely, a solution to the $S$-unit equation which lies in $\Q$.
\end{proof}

\section{Results and an Application}\label{sec:Conclusions}

\subsection{Results} Using the process described in this paper, we found $63$ distinct $\Q$-isomorphism classes of Picard curves defined over $\Q$ with good reduction outside of $3$. Tables \ref{tab:QQK1}, \ref{tab:QK3}, \ref{tab:QK2} give the coefficients of integral affine models of the form
\[ y^3 = x^4 + a_3 x^3 + a_2 x^2 + a_1 x + a_0, \qquad a_i \in \Z. \]
In each case, the given model itself has good reduction away from $3$. Each table lists the curves possessing one particular field system. Further, each table is broken into blocks of $3$ curves each. The curves in each block are cubic twists of one another, and are isomorphic over $\Q(\sqrt[3]{3})$.

\begin{table}[!ht]
\caption{Picard curves with field system $(K_0, K_0, K_1)$}
\label{tab:QQK1}
\begin{tabular}{rrrr}
\toprule
$a_3$ & $a_2$ & $a_1$ & $a_0$ \\
\midrule
$ \phantom{-000}0 $ & $ \phantom{-000}0 $ & $ \phantom{-000}1 $ & $ \phantom{-000}0 $  \\
$     0 $  & $     0 $  & $    27 $  & $     0 $  \\
$     0 $  & $     0 $  & $   729 $  & $     0 $  \\
\midrule
$     2 $  & $     2 $  & $     1 $  & $     0 $  \\
$     2 $  & $     6 $  & $     5 $  & $   -14 $  \\
$    14 $  & $   114 $  & $   455 $  & $  -584 $  \\
\midrule
$    -2 $  & $     0 $  & $     1 $  & $    -2 $  \\
$    -2 $  & $   -12 $  & $    13 $  & $  -140 $  \\
$    50 $  & $   816 $  & $  4775 $  & $ -5642 $  \\
\bottomrule
\end{tabular}
\end{table}

\begin{table}[!ht]
\caption{Picard curves with field system $(K_0, K_3)$}
\label{tab:QK3}
\begin{tabular}{rrrr}
\toprule
$a_3$ & $a_2$ & $a_1$ & $a_0$ \\
\midrule
$ \phantom{-000}0 $  & $ \phantom{-000}0 $  & $ \phantom{-000}3 $ & $ \phantom{-000}0 $ \\
$     0 $  & $     0 $  & $    81 $  & $     0 $ \\
$     0 $  & $     0 $  & $  2187 $  & $     0 $ \\
\midrule
$     0 $  & $     0 $  & $     9 $  & $     0 $ \\
$     0 $  & $     0 $  & $   243 $  & $     0 $ \\
$     0 $  & $     0 $  & $  6561 $  & $     0 $ \\
\midrule
$    -2 $  & $     0 $  & $    11 $  & $     8 $ \\
$   -18 $  & $   108 $  & $    27 $  & $     0 $ \\
$   -54 $  & $   972 $  & $   729 $  & $     0 $ \\
\midrule
$    12 $  & $    -6 $  & $     1 $  & $     0 $  \\
$    36 $  & $   -54 $  & $    27 $  & $     0 $  \\
$   112 $  & $  -156 $  & $    85 $  & $   352 $  \\
\bottomrule
\end{tabular}
\end{table}

\begin{table}[!ht]
\caption{Picard curves with field system $(K_0, K_2)$}
\label{tab:QK2}
\begin{tabular}{ccc}
   \begin{tabular}{rrrr}
\toprule
$a_3$ & $a_2$ & $a_1$ & $a_0$ \\
\midrule
$    -3 $  & $     \phantom{-000}0 $  & $     1 $  & $     0 $  \\
$    -5 $  & $   -21 $  & $     4 $  & $    19 $  \\
$   -31 $  & $    87 $  & $   644 $  & $  -701 $  \\
\midrule
$     \phantom{-000}0 $  & $    -3 $  & $     1 $  & $     0 $  \\
$     0 $  & $   -27 $  & $    27 $  & $     0 $  \\
$     0 $  & $  -243 $  & $   729 $  & $     0 $  \\
\midrule
$    -2 $  & $    -3 $  & $     1 $  & $     1 $  \\
$   -14 $  & $    33 $  & $    31 $  & $   -17 $  \\
$   -38 $  & $   177 $  & $  1309 $  & $  -284 $  \\
\midrule
$    -1 $  & $    -3 $  & $     2 $  & $     1 $  \\
$    13 $  & $    33 $  & $   -50 $  & $   -71 $  \\
$    31 $  & $    87 $  & $ -2102 $  & $ -2159 $  \\
\midrule
$    -2 $  & $    -3 $  & $     3 $  & $     3 $  \\
$    18 $  & $    81 $  & $    27 $  & $     0 $  \\
$   -50 $  & $   573 $  & $   571 $  & $   -53 $  \\
\midrule
$    -5 $  & $    -3 $  & $     4 $  & $     1 $  \\
$   -23 $  & $    87 $  & $     4 $  & $  -107 $  \\
$   -57 $  & $   216 $  & $  3051 $  & $ -3078 $  \\
\midrule
$    -6 $  & $     3 $  & $     1 $  & $     0 $  \\
$   -18 $  & $    27 $  & $    27 $  & $     0 $  \\
$   -58 $  & $   411 $  & $    77 $  & $  -431 $  \\
\bottomrule
   \end{tabular}
& \phantom{0} &
   \begin{tabular}{rrrr}
\toprule
$a_3$ & $a_2$ & $a_1$ & $a_0$ \\
\midrule
$     \phantom{-000}3 $  & $    -6 $  & $     \phantom{-000}1 $  & $     0 $ \\
$   -13 $  & $   -21 $  & $    50 $  & $   -17 $ \\
$   -31 $  & $  -399 $  & $   158 $  & $   271 $ \\
\midrule
$     9 $  & $     6 $  & $     1 $  & $     0 $ \\
$   -23 $  & $   -21 $  & $     4 $  & $     1 $ \\
$   -69 $  & $  -189 $  & $   108 $  & $    81 $ \\
\midrule
$     6 $  & $    -9 $  & $     3 $  & $     0 $ \\
$   -22 $  & $   -21 $  & $    23 $  & $    19 $ \\
$    78 $  & $   459 $  & $   135 $  & $  -162 $ \\
\midrule
$     0 $  & $    -9 $  & $     9 $  & $     0 $ \\
$     0 $  & $   -81 $  & $   243 $  & $     0 $ \\
$     0 $  & $  -729 $  & $  6561 $  & $     0 $ \\
\midrule
$     9 $  & $    18 $  & $    -9 $  & $     0 $ \\
$    27 $  & $   162 $  & $  -243 $  & $     0 $ \\
$    93 $  & $  2241 $  & $  4482 $  & $ -4293 $ \\
\midrule
$    24 $  & $     3 $  & $    -1 $  & $     0 $ \\
$    72 $  & $    27 $  & $   -27 $  & $     0 $ \\
$   216 $  & $   243 $  & $  -729 $  & $     0 $ \\
\midrule
$     3 $  & $   -24 $  & $     1 $  & $     0 $ \\
$     9 $  & $  -216 $  & $    27 $  & $     0 $ \\
$    27 $  & $ -1944 $  & $   729 $  & $     0 $ \\
\bottomrule   \end{tabular}
\\
\end{tabular}
\end{table}

\subsection{Relation to Ihara's Question}

Let $\ell$ be a rational prime number, and let $K$ be a number field. We let $K(\bmu_{\ell^\infty})$ denote the maximal $\ell$-cyclotomic extension of $K$, and let $\ten = \ten(K,\ell)$ denote the maximal pro-$\ell$ extension of $K(\bmu_{\ell^\infty})$ unramified outside $\ell$. An abelian variety $A/K$ is said to be \emph{heavenly} at $\ell$ if the field $K(A[\ell^\infty])$ is a subfield of $\ten$.\footnote{This choice of notation and terminology is explained in more detail in \cite{Rasmussen-Tamagawa:2013}. The Japanese kanji $\ten$ and $\san$ are pronounced \emph{ten} and \emph{san}, respectively.}

Let $G_\Q$ denote the absolute Galois group $\Gal(\overline{\Q}/\Q)$, and let $X = \P^1_{01\infty}$, the projective line (over $\Q$) with three points deleted. There is a canonical outer Galois representation attached to the pro-$\ell$ algebraic fundamental group of $\overline{X} = X \times_\Q \overline{\Q}$,
\[ \Phi_\ell \colon G_\Q \longrightarrow \Out \bigl( \pi_1^\text{pro-$\ell$}(\overline{X}) \bigr). \]
We let $\san = \san(\Q, \ell)$ denote the fixed field of the kernel of $\Phi_\ell$. It is well known that $\san \subseteq \ten$, and Ihara has asked, specifically in the case $K = \Q$, whether or not the two fields coincide.

Now, suppose $\mathcal{C}/\Q$ is a projective curve which satisfies the following two conditions:
\begin{enumerate}
\item[(I1)] $\mathcal{C}/\Q$ has good reduction away from $\ell$,
\item[(I2)] An open subset of $\mathcal{C}$ appears in the pro-$\ell$ \'etale tower over $\P^1_{01\infty}$.
\end{enumerate}
To rephrase the second condition, there exists, over some field extension, a morphism $f \colon \mathcal{C} \to \P^1$ branched only over $\{0, 1, \infty \}$, and whose Galois closure is of $\ell$-power degree. Let $J$ denote the Jacobian variety of $\mathcal{C}$. Then Anderson and Ihara have given a necessary condition on the morphism $f$ \cite[Cor.~3.8.1]{Anderson-Ihara:1988} which implies $\Q(J[\ell^\infty]) \subseteq \san$.

The Picard curves enumerated in the present article all have good reduction away from $3$, and naturally admit a degree $3$ morphism $f_0 \colon \mathcal{C} \to \P^1$. Thus, in the spirit of \cite{Papanikolas-Rasmussen:2007} or \cite{Rasmussen:2012}, we may use the criterion of Anderson and Ihara if there exists a morphism $g \colon \P^1 \to \P^1$ such that $f = g \circ f_0$ satisfies the conditions (I1) and (I2) above. For example, on each of the curves
\begin{equation}\label{eq:simple_Picard}
y^3 = x^4 + 3^s x, \qquad 0 \leq s \leq 8,
\end{equation}
there exists a degree $9$ morphism $f \colon \mathcal{C} \to \P^1$ defined in affine coordinates by
\[ (x, y) \mapsto -3^{-s} x^3. \]
It is necessarily Galois over $K = \Q(\bmu_3)$, as it corresponds to a composition of degree $3$ Kummer extensions: $K(t) \hookrightarrow K(t^{1/3}) \hookrightarrow K( (t^{4/3} + 3^s t^{1/3})^{1/3})$. The verification of Anderson and Ihara's criteria is immediate, and so we obtain the following.
\begin{corollary}
Let $\mathcal{C}$ be one of the nine Picard curves over $\Q$ with good reduction away from $3$, possessing an affine integral model of the form \eqref{eq:simple_Picard}. Let $J$ denote the Jacobian of $\mathcal{C}$. Then $\Q(J[3^\infty]) \subseteq \san(\Q, 3)$.
\end{corollary}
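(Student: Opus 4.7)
The plan is to verify the two hypotheses (I1) and (I2) of Anderson–Ihara's criterion for each of the nine curves $\mathcal{C}_s \colon y^3 = x^4 + 3^s x$, $0 \leq s \leq 8$, and then invoke \cite[Cor.~3.8.1]{Anderson-Ihara:1988} directly.

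First, I would identify these nine curves inside the enumeration given in \S\ref{sec:Conclusions}. With $a_3 = a_2 = a_0 = 0$, the values $a_1 = 3^s$ appear in the top block of Table \ref{tab:QQK1} (for $s = 0, 3, 6$) and in the first two blocks of Table \ref{tab:QK3} (for $s = 1, 4, 7$ and $s = 2, 5, 8$). Since the models displayed in those tables all have good reduction away from $3$, condition (I1) is immediate.

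Second, I would check condition (I2) by analyzing the degree-$9$ map $f \colon \mathcal{C}_s \to \P^1$ given by $(x,y) \mapsto t := -3^{-s}x^3$. Writing $f = g \circ f_0$, where $f_0(x,y) = x$ is the canonical trigonal map and $g(x) = -3^{-s}x^3$, both factors have degree $3$. The map $f_0$ branches precisely over the locus $F_s(x,1) = 0$ together with $\infty$; since $F_s(x,1) = x(x^3 + 3^s)$, this locus consists of $0$, $\infty$, and the three cube roots of $-3^s$. Applying $g$ sends $0 \mapsto 0$, $\infty \mapsto \infty$, and (for any $x$ with $x^3 = -3^s$) $x \mapsto -3^{-s}(-3^s) = 1$. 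The map $g$ itself is totally ramified over $\{0,\infty\}$, which contributes nothing new. Hence $\mathrm{Branch}(f) = \{0,1,\infty\}$, exactly as needed.

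To see that the Galois closure of $f$ has $3$-power degree, I would observe that over $K = \Q(\bmu_3)$ the function field extension $K(\mathcal{C}_s)/K(t)$ is presented by the tower $K(t) \hookrightarrow K(t^{1/3}) \hookrightarrow K\bigl((t^{4/3} + 3^s t^{1/3})^{1/3}\bigr)$, i.e.\ two successive Kummer extensions of degree $3$ (with $\zeta_3$ already in the base). The Galois closure of a composition of $3$-Kummer extensions over a field containing $\bmu_3$ is obtained by adjoining all conjugates of the Kummer radicals, each of which generates a further degree-$3$ Kummer extension, so the closure is a $3$-extension of $K(t)$. This completes condition (I2); the mild subtlety here, which I expect to be the main obstacle and which I would expand upon, is the bookkeeping that shows the Galois closure is obtained by finitely many further Kummer layers of $3$-power order rather than introducing a non-$3$-power factor.

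With (I1) and (I2) in hand, \cite[Cor.~3.8.1]{Anderson-Ihara:1988} applies, forcing $\Q(J[3^\infty]) \subseteq \san(\Q,3)$ for each of the nine curves, which is the claimed conclusion.
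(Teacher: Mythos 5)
Your proposal is correct and follows essentially the same route as the paper: the same degree-$9$ map $(x,y)\mapsto -3^{-s}x^3$, its factorization through the trigonal cover, the Kummer tower over $K(t)$, and Anderson--Ihara's criterion. You fill in the branch-locus computation and the $3$-power-degree argument for the Galois closure, both of which the paper dismisses as ``immediate''; in fact your phrasing is slightly more careful than the paper's, since the cover itself is not Galois over $K(t)$ (a lift of $x\mapsto\zeta_3 x$ to $K(\mathcal{C})$ would require $\zeta_9\in K(\mathcal{C})$, which fails), but as you correctly observe, condition (I2) only needs the Galois closure to be a $3$-extension, and your Kummer-radical argument establishes exactly that.
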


\newcommand{\etalchar}[1]{$^{#1}$}

\end{CJK}
\end{document}